\documentclass[twoside,leqno]{article}

\usepackage[letterpaper]{geometry}
\usepackage{amsmath,amssymb,bm,mathshortcuts, color}
\usepackage[normalem]{ulem}
\usepackage{ltexpprt}
\usepackage{hyperref}
\usepackage{booktabs}
\usepackage{graphicx}
\usepackage[para]{threeparttable} 



\begin{document}

\newcommand\relatedversion{}

\title{\Large New complementarity formulations for root-finding and optimization of piecewise-affine functions in abs-normal form\relatedversion}
\author{Yulan Zhang \and Kamil A. Khan\thanks{Department of Chemical Engineering, McMaster University, Hamilton, ON, Canada. This work was supported in part by the Natural Sciences and Engineering Research Council of Canada (NSERC) under Grant
RGPIN-2017-05944.}}

\date{}

\maketitle


\fancyfoot[R]{\scriptsize{Copyright \textcopyright\ 2025 by SIAM\\
Unauthorized reproduction of this article is prohibited}}





\begin{abstract} \small\baselineskip=9pt
  Nonsmooth functions have been used to model discrete-continuous phenomena such as contact mechanics, and are also prevalent in neural network formulations via activation functions such as ReLU. At previous AD conferences, Griewank et al.~showed that nonsmooth functions may be approximated well by piecewise-affine functions constructed using an AD-like procedure. Moreover, such a piecewise-affine function may always be represented in an ``abs-normal form'', encoding it as a collection of four matrices and two vectors. We present new general complementarity formulations for root-finding and optimization of piecewise-affine functions in abs-normal form, with significantly fewer restrictions than previous approaches. In particular, piecewise-affine root-finding may always be represented as a mixed-linear complementarity problem (MLCP), which may often be simplified to a linear complementarity problem (LCP). We also present approaches for verifying existence of solutions to these problems. A proof-of-concept implementation in Julia is discussed and applied to several numerical examples, using the PATH solver to solve complementarity problems.
\end{abstract}

\section{Introduction}

Nonsmooth functions arise in several applications, such as systems with discrete transitions between flow regimes or thermodynamic phase. Nonsmoothness may also arise in numerical methods that are applied to smooth problems, such as the well-known ReLU activation function in neural networks that are used for classification or regression. In the AD2012 conference, the first approaches~\cite{KhanBartonAD,GriewankAD2012} were presented for computing correct generalized derivatives for composite functions accurately and automatically, using new extensions of the standard vector forward mode of automatic/algorithmic differentiation (AD), enabling dedicated methods for nonsmooth root-finding and optimization that rely on generalized derivative information.

Subsequently, Griewank et al.~\cite{GriewankBosse,GriewankStreubel,fiege2018algorithmic} developed a \emph{piecewise-linearization} variant of the forward AD mode. In a forward sweep through a computational graph, this variant locally approximates a composite nonsmooth function as a piecewise-affine function. Next, they proposed a new \emph{abs-normal form} for piecewise affine functions (summarized in Section~\ref{sec:ANF} below); this form encodes any piecewise-affine function as a collection of four matrices and two vectors. Again, this abs-normal form is to be constructed using a forward sweep through a computational graph, as illustrated in Example~\ref{ex:construction} below. Several properties and construction methods for the abs-normal form have been studied~\cite{streubel2014representation,kubota2018enumeration,bosse2019almost,hegerhorst2020optimality,hegerhorst2020relation}, along with assessments of the form's usefulness and efficacy~\cite{bosse2021study}.

Complementarity formulations~\cite{CottlePangStone} encode discrete behavior quite differently to nonsmooth representations, and complementarity-based solvers such as \textsf{PATH}~\cite{DirkseFerris, FerrisMunson} are powerful in practice. While a complementarity-based root-finding method was proposed for piecewise-affine functions in abs-normal form~\cite{GriewankStreubel}, we will show here that this method may be improved: its requirements may be weakened significantly by exploiting the strict lower triangularity of one of the abs-normal form's coefficient matrices. Hence, we aim to show that root-finding for the abs-normal form is equivalent to a mixed linear complementarity problem (MLCP), which in turn reduces to a linear complementarity problem (LCP) under an additional mild regularity assumption. Analogously, we will show that global optimization for a piecewise-affine function in abs-normal form is equivalent to a linear program with complementarity constraints (LPCC), which may be reformulated as a mixed-integer linear program (MILP) via a big-M approach. We also present approaches for checking whether a global minimum exists at all. These new approaches have been implemented in Julia, and we present numerical examples as well. Each of these methods is a useful application of the abs-normal form, complementing recent AD-based approaches for automatically generating abs-normal forms of local approximations of nonsmooth functions, and extending the reach of these AD variants.

\yzedit{Hegerhorst-Schultchen et al.~\cite{hegerhorst2020relation} also studied the optimization of unconstrained nonsmooth problems in a nonlinear generalization of the abs-normal form, demonstrating that these problems are equivalent to solving certain mathematical programs with equilibrium constraints (MPECs), particularly under a structural regularity assumption called the \emph{linear independence kink qualification (LIKQ}). 
Their work also extends to constrained nonsmooth nonlinear programming problems (NLPs), where both the objective function and constraints are represented in a genearlized abs-normal form. Overall, the authors explored how established concepts in MPEC theory, such as various stationarity conditions and constraint qualifications, apply to nonsmooth optimization problems in abs-normal form with embedded nonlinearities. Our new optimization formulation~\eqref{eq:LPCC}, on the other hand, focuses strictly on piecewise affine functions in abs-normal form but does not impose other structural assumptions. In particular, we do not consider constraint qualifications or stationarity conditions at all, although our piecewise affine requirement may be seen to play a similar role to a constraint qualification.}

The remainder of this manuscript is structured as follows. Section~\ref{sec:preliminaries} introduces abs-normal forms and an established complementarity-based root-finding approach. Section~\ref{sec:root} presents our new root-finding formulations for piecewise-affine functions, and Section~\ref{sec:optim} presents analogous new optimization formulations. Section~\ref{sec:implementation} summarizes our proof-of-concept Julia implementation (which is also available on GitHub), and Section~\ref{sec:examples} applies this implementation to several illustrative numerical examples.

\section{Preliminaries}
\label{sec:preliminaries}

\subsection{Notation}

For any vectors $\mathbf{p},\mathbf{q}\in\reals^n$, $|\mathbf{p}|$ will denote the componentwise absolute value, whose $\supth{i}$ component is $|p_i|$, and $\max(\mathbf{p},\mathbf{q})$ will denote an analogous componentwise bivariate ``$\max$'', whose $\supth{i}$ component is $\max(p_i,q_i)$. Inequalities involving vectors are to be interpreted componentwise as well.

This article is concerned with continuous \emph{piecewise-affine} (``$\mathcal{PA}$'') functions
$\f:\reals^n\to\reals^m$, whose values are chosen from finitely many affine functions (called \emph{affine pieces}). Several properties of piecewise affine functions were established by Scholtes~\cite{Scholtes}. If each affine piece of a $\mathcal{PA}$ function
$\f$ is in fact linear, then $\f$ is \emph{piecewise linear}~($\mathcal{PL}$).

The \emph{complementarity condition} $\0\leq\mathbf{\p}\perp\mathbf{\q}\geq\0$ is equivalent to the following three conditions holding simultaneously:
$\p\geq\0$, $\mathbf{q}\geq\0$, and $\transpose{\p}\mathbf{q}=0$. 

In optimization problems, constraints are marked with the phrase ``subject to'', abbreviated to ``s.t.''.

\subsection{Abs-normal forms}
\label{sec:ANF}

Any $\mathcal{PA}$ mapping  may in principle be
represented as a composition of affine functions and absolute-value
functions~\cite{Scholtes}. This observation motivated Griewank et al.'s development of
\emph{abs-normal forms}~\cite{GriewankStreubel,GriewankBosse},
which are described as follows, and effectively encode $\mathcal{PA}$
functions as constant vector-matrix pairs.

\begin{Definition}{\rm [adapted from~\cite{GriewankStreubel,GriewankBosse}]
  \label{def:ANF}
  Given a $\mathcal{PA}$ function $\f:\reals^n\to\reals^m$, there exist a number $s\in\mathbb{N}$,
  matrices $\Z\in\reals^{s\times n}$,
  $\J\in\reals^{m\times n}$, and $\Y\in\reals^{m\times s}$, a strictly lower triangular matrix $\LL\in\reals^{s\times s}$, and vectors $\cc \in\reals^s$ and $\bb\in\reals^m$, for which, for any $\x\in\reals^n$, there exists a unique vector $\yzedit{\z \in \reals^s}$ for which
\begin{equation}
\label{eq:ANF}
\begin{bmatrix}
  \z \\ \f(\x)
\end{bmatrix}
=
\begin{bmatrix}
  \cc \\ \bb
\end{bmatrix}
+
\begin{bmatrix}
  \Z & \LL \\ \J & \Y
\end{bmatrix}
\begin{bmatrix}
  \x \\ |\z|
\end{bmatrix},
\end{equation}
The right-hand side of~\eqref{eq:ANF} is called an \emph{abs-normal form} of $\f$. }
\end{Definition}

Since $\LL$ is strictly lower triangular, observe that for each $i\in\{1,\ldots,s\}$, the $\supth{i}$ row of \eqref{eq:ANF} defines $z_i$ in terms of $\x$ and values $|z_j|$ with $j<i$.

 Griewank et al.~\cite{GriewankStreubel} outlined how an abs-normal form may be constructed for a $\mathcal{PA}$ function that is already expressed as a composition of affine functions and absolute-value operations.  We illustrate this procedure in the following example.

\begin{Example}\label{ex:construction}
  Consider the $\mathcal{PA}$ mapping $f:x\in\reals\mapsto x + \Big|2|3x + 4| - 5\Big| + 6|7x-8|$. To construct an abs-normal form for $f$, we first construct a vector $\z$ whose components are the arguments of the absolute-value operations in $f$'s definition, listed in the order these would be evaluated by a compiler or forward-AD-like procedure:
  \begin{align*}
    z_1 &:= 3x + 4, \\
    z_2 &:= 2|3x+4| - 5 = 2|z_1|-5, \\
    z_3 &:= 7x-8.
  \end{align*}
  Then, $f$ is written in terms of $x$ and $|\z|$. For each $x\in\reals$,
  \[
    f(x) = x + |z_2| + 6|z_3|.
  \]
  At this point, the above equations may be collected into the following abs-normal form:
  \[
    \begin{bmatrix}
      z_1 \\ z_2 \\ z_3 \\ f(x)
    \end{bmatrix}
    =
    \begin{bmatrix}
      \phantom{-}4 \\ -5 \\ -8 \\ \phantom{-}0
    \end{bmatrix}
    +
    \begin{bmatrix}
      3 & 0 & 0 & 0 \\
      0 & 2 & 0 & 0 \\
      7 & 0 & 0 & 0 \\
      1 & 0 & 1 & 6
    \end{bmatrix}
    \begin{bmatrix}
      x \\ |z_1| \\ |z_2| \\ |z_3|
    \end{bmatrix}.
  \]
\yzedit{By inspection, this representation is consistent with \eqref{eq:ANF} with the identifications:
\begin{align*}
    \cc&= \begin{bmatrix}
    \phantom{-}4 \\ -5 \\ -8
    \end{bmatrix},
    & \Z&= \begin{bmatrix}
    3 \\ 0 \\ 7
    \end{bmatrix},
    & \LL&= \begin{bmatrix}
    0 & 0 & 0\\
    2 & 0 & 0 \\
    0 & 0 & 0
    \end{bmatrix},\\
    \bb&= \begin{bmatrix}
    0
    \end{bmatrix}, 
    &\J&= \begin{bmatrix}
    1
    \end{bmatrix},
    &\Y&= \begin{bmatrix}
    0 & 1 & 6
    \end{bmatrix}.
\end{align*}
Observe that $\LL$ is indeed strictly lower triangular, since in general, the above procedure will never define any $z_i$ in terms of another $z_j$ with $j>i$.}
\end{Example}

In the remainder of this manuscript, we aim to develop new complementarity-based methods for root-finding and optimization for the generic function in the following assumption.

\begin{Assumption}
  \label{ass:ANF}
  Consider a $\mathcal{PA}$ function $\f:\reals^n\to\reals^m$, with the
  abs-normal form~\eqref{eq:ANF}.
\end{Assumption}

To solve the problem of finding a root of $\f$, the following
complementarity-based approach was established
in~\cite[Section~7.5]{GriewankStreubel}. 
\yzedit{In the following proposition, a \emph{linear complementarity problem (LCP)} involves solving a condition of the form $\0\leq \x \perp\mathbf{Mx}+\mathbf{q}\geq \0$ for $\x$, and a \emph{mixed linear complementarity problems (MLCP)} is an LCP coupled with linear equation systems. Established solvers including \textsf{PATH}~\cite{DirkseFerris, FerrisMunson} can solve LCPs and variants including certain MLCPs.}

\begin{proposition}[from~\cite{GriewankStreubel}]
  \label{prop:oldRoot}
Suppose that Assumption~\ref{ass:ANF} holds, and assume
that $m=n$ and $\J$ is nonsingular. Define
the following quantities:
\[
\bS:=\LL-\Z\J^{-1}\Y\in\reals^{s\times s},
\qquad
\hat{\cc}:=\cc-\Z\J^{-1}\bb\in\reals^s.
\]
Then $\f(\x)=\0$ if and only if there exist $\uu,\w \in\reals^s$ which solve the following MLCP:
\begin{equation}
  \label{eq:oldMLCP}
  \begin{array}{rcl}
    \uu-\w&=&\hat{\cc} + \bS(\uu+\w) \\
    \0\leq \uu &\perp& \w\geq \0.
  \end{array}
\end{equation}
Moreover, if $(\I-\bS)$ is nonsingular, then $\w$ solves the following LCP:
\begin{align}
    \label{eq:oldLCP}
      \0\leq (\I-\bS)^{-1}\hat{\cc} + (\I-\bS)^{-1}(\I+\bS)\w\perp \w\geq \0,
\end{align}
with $\uu$ still described by~\eqref{eq:oldMLCP}.
\end{proposition}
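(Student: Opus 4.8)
The plan is to unpack the abs-normal form~\eqref{eq:ANF} into its two constituent blocks, namely $\z = \cc + \Z\x + \LL|\z|$ and $\f(\x) = \bb + \J\x + \Y|\z|$, and then eliminate the variable $\x$. First I would observe that, since $\J$ is nonsingular, the condition $\f(\x)=\0$ together with the second block is equivalent to $\x = -\J^{-1}(\bb + \Y|\z|)$. Substituting this expression for $\x$ into the first block and collecting terms yields, after a short calculation, the reduced condition $\z = \hat{\cc} + \bS|\z|$, using precisely the definitions of $\bS$ and $\hat{\cc}$ in the statement. Hence a root of $\f$ exists if and only if there is a $\z\in\reals^s$ satisfying this reduced fixed-point-type equation, with the roots $\x$ recovered from such $\z$ via $\x = -\J^{-1}(\bb + \Y|\z|)$.

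Next I would apply the standard device for converting absolute values into complementarity conditions: writing $u_i := \max(z_i,0)$ and $w_i := \max(-z_i,0)$ for the positive and negative parts, one has $z_i = u_i - w_i$, $|z_i| = u_i + w_i$, and $0 \leq u_i \perp w_i \geq 0$; conversely, any $\uu,\w\geq\0$ with $\transpose{\uu}\w = 0$ arise this way from $\z := \uu - \w$, for which $|\z| = \uu + \w$. Replacing $\z$ by $\uu-\w$ and $|\z|$ by $\uu+\w$ in the reduced condition $\z = \hat{\cc} + \bS|\z|$ produces exactly the MLCP~\eqref{eq:oldMLCP}. Both directions of the ``if and only if'' then follow, though the reverse direction requires a little care: given a solution $(\uu,\w)$ of~\eqref{eq:oldMLCP}, one sets $\z := \uu - \w$ and $\x := -\J^{-1}(\bb + \Y(\uu+\w))$, and must then verify that this $\z$ satisfies the first block equation $\z = \cc + \Z\x + \LL|\z|$ for this particular $\x$ — which is the substitution computation run in reverse — so that, by the uniqueness clause in Definition~\ref{def:ANF}, this $\z$ is the vector the abs-normal form associates with $\x$; evaluating the second block at this $\z$ then gives $\f(\x) = \bb + \J\x + \Y(\uu+\w) = \0$.

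For the LCP reformulation, I would start from the linear equation in~\eqref{eq:oldMLCP}, rewrite it as $(\I-\bS)\uu = \hat{\cc} + (\I+\bS)\w$, and use nonsingularity of $(\I-\bS)$ to solve explicitly for $\uu = (\I-\bS)^{-1}\hat{\cc} + (\I-\bS)^{-1}(\I+\bS)\w$. Substituting this into the complementarity condition $\0\leq\uu\perp\w\geq\0$ leaves exactly the LCP~\eqref{eq:oldLCP} in the single unknown $\w$, with $\uu$ then determined by the displayed affine formula — which is what it means for $\uu$ to ``still be described by~\eqref{eq:oldMLCP}''. Conversely, any solution $\w$ of~\eqref{eq:oldLCP}, paired with the $\uu$ from that formula, satisfies~\eqref{eq:oldMLCP}.

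I expect no serious obstacle here: the argument is essentially substitution plus the textbook absolute-value-to-complementarity identity, with nonsingularity of $\J$ (respectively $\I-\bS$) used only to eliminate $\x$ (respectively $\uu$). The one point demanding attention is the backward direction of the MLCP equivalence, where one must confirm that the pair $(\x,\z)$ reconstructed from $(\uu,\w)$ is genuinely consistent with the abs-normal form — i.e.\ that $\z$ is the unique vector assigned to $\x$ by~\eqref{eq:ANF} — rather than merely solving the reduced equation; this is handled by checking the first block equation and invoking the uniqueness in Definition~\ref{def:ANF}.
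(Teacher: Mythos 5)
Your proof is correct. Note that the paper itself states Proposition~\ref{prop:oldRoot} without proof (it is quoted from Griewank and Streubel), so there is no in-paper argument to diverge from; your derivation — eliminating $\x$ via $\x=-\J^{-1}(\bb+\Y|\z|)$ to reach the reduced equation $\z=\hat{\cc}+\bS|\z|$, then applying the positive/negative-part splitting $\uu=\max(\z,\0)$, $\w=\max(-\z,\0)$ with $|\z|=\uu+\w$ and $\0\leq\uu\perp\w\geq\0$, and finally eliminating $\uu$ via $(\I-\bS)\uu=\hat{\cc}+(\I+\bS)\w$ when $\I-\bS$ is nonsingular — is exactly the standard route and is sound. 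You also correctly identified and closed the one delicate point, namely that in the backward direction the reconstructed $\z$ must be verified to satisfy the first block equation so that the uniqueness clause of Definition~\ref{def:ANF} identifies it as the vector the abs-normal form assigns to the reconstructed $\x$. Your argument is the natural counterpart of the paper's own proof of Theorem~\ref{thm:newMLCP}, with the difference that here the nonsingularity of $\J$ is used to eliminate $\x$, whereas the paper's new result instead exploits the strict lower triangularity of $\LL$ (through $(\I-\LL)^{-1}$) so that no such assumption is needed.
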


\section{New root-finding formulations}
\label{sec:root}

The derivation of Proposition~\ref{prop:oldRoot} does not actually make use of the fact that $\LL$ is strictly lower triangular. In this section, we will show that this triangularity allows for similar
root-finding formulations to Proposition~\ref{prop:oldRoot} under significantly less restrictive assumptions.  Observe that $(\I-\LL)$
is unit lower triangular, and is therefore invertible. Moreover, $(\I-\LL)^{-1}$
is straightforward to evaluate.  The matrices and vectors in the
following definition will be useful in our subsequent development; these quantities all have the same dimensions as their namesakes.
\begin{Definition}
\label{def:MandV}
  Under Assumption~\ref{ass:ANF}, define the following matrices and vectors:
  \begin{align*}
    \tilde{\cc}&:=(\I-\LL)^{-1}\cc, &
                                      \tilde{\bb}&:= \bb+\Y\tilde{\cc} \\
    \tilde{\LL}&:= (\I-\LL)^{-1}(\I+\LL),
                                    &\tilde{\Y}&:= \Y(\I+\tilde{\LL}), \\
    \tilde{\Z}&:= (\I-\LL)^{-1}\Z,
    &\tilde{\J}&:= \J+\Y\tilde{\Z}.
  \end{align*}
  In the special case where $\tilde{\J}$ is nonsingular, define the
  following quantities as well:
  \[
    \check{\cc}:=\tilde{\cc} - \tilde{\Z}\tilde{\J}^{-1}\tilde{\bb},
    \qquad
    \check{\bS}:=\tilde{\LL} - \tilde{\Z}\tilde{\J}^{-1}\tilde{\Y}.
  \]
\end{Definition}
Since $\LL$ is strictly lower triangular, the matrix $\tilde{\LL}$ is unit lower triangular.

\yzedit{
We next illustrate these constructions for the function from Example~\ref{ex:construction}.
  
\begin{Example}\label{ex:ConstructMartixAndVector}
    Consider the  $\mathcal{PA}$ mapping $f:x\in\reals\mapsto x + \Big|2|3x + 4| - 5\Big| + 6|7x-8|$, whose abs-normal form~\eqref{eq:ANF} was constructed in Example~\ref{ex:construction}. Under Definition~\ref{def:MandV}, and using the matrices obtained in Example~\ref{ex:construction}, the following matrices and vectors may be computed directly:
    \begin{align*}
    \tilde{\cc}&= \begin{bmatrix}
    \phantom{-}4 \\ \phantom{-}3 \\ -8
    \end{bmatrix},
    & \tilde{\Z}&= \begin{bmatrix}
    3 \\ 6 \\ 7
    \end{bmatrix},
    & \tilde{\LL}&= \begin{bmatrix}
    1 & 0 & 0\\
    4 & 1 & 0 \\
    0 & 0 & 1
    \end{bmatrix},\\
    \tilde{\bb}&= \begin{bmatrix}
    -45
    \end{bmatrix}, 
    &\tilde{\J}&= \begin{bmatrix}
    49
    \end{bmatrix},
    &\tilde{\Y}&= \begin{bmatrix}
    4 & 2 & 12
    \end{bmatrix}.
\end{align*}
In this example, $\tilde{\J}$ is indeed nonsingular, so we may also compute the remaining quantities:
    \begin{align*}
    \check{\cc}&=\begin{bmatrix}
    \phantom{-}6.8 \\ \phantom{-}8.5 \\ -1.6
    \end{bmatrix}, 
    &\check{\bS}&=\begin{bmatrix}
    \phantom{-}0.76 & -0.12 &-0.74\\
    \phantom{-}3.50  & \phantom{-}0.76 & -1.50\\
    -0.57 & -0.28 & -0.71
    \end{bmatrix}.
    \end{align*}
The above matrices and vectors can be used to numerically construct our new formulations for the function $f(x)= x + \Big|2|3x + 4| - 5\Big| + 6|7x-8|$ in this manuscript. 

\end{Example}}

The following theorem uses the constructions of Definition~\ref{def:MandV} in a new general root-finding result.

\begin{theorem}
  \label{thm:newMLCP}
  Suppose that Assumption~\ref{ass:ANF} holds, and consider the auxiliary matrices and vectors from Definition~\ref{def:MandV}.
  Then, $\f(\x)=\0$ if and only if there exists $\w\in\reals^s$ for which $(\x,\w)$ solves the following MLCP:
  \begin{equation}
    \label{eq:newMLCP}
    \begin{array}{rcl}
      \0 &=& \tilde{\bb} + \tilde{\J}\x + \tilde{\Y}\w \\
      \0\leq \w &\perp& \tilde{\cc} + \tilde{\Z}\x + \tilde{\LL}\w \geq \0.
    \end{array}
  \end{equation}
  Moreover, if $m=n$ and $\tilde{\J}$ is nonsingular,
  then $\f(\x)=\0$ if and only if 
  \begin{equation}
  \label{eq:linear}
    \tilde{\J}\x=-\tilde{\bb}-\tilde{\Y}\w,
  \end{equation}
  where $\w$ solves the LCP:
  \begin{equation}
    \label{eq:newLCP}
    \0\leq \w \perp \check{\cc} + \check{\bS}\w \geq \0.
  \end{equation}
\end{theorem}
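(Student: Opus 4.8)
The plan is to introduce a standard pair of complementarity variables for the absolute values in the abs-normal form and then rewrite \eqref{eq:ANF} entirely in terms of those variables, exploiting the invertibility of $(\I-\LL)$. Fix $\x\in\reals^n$ and let $\z$ be the unique vector satisfying \eqref{eq:ANF}; set $\uu:=\max(\z,\0)$ and $\w:=\max(-\z,\0)$, so $\z=\uu-\w$, $|\z|=\uu+\w$, and $\0\leq\uu\perp\w\geq\0$. Substituting $\z=\uu-\w$ and $|\z|=\uu+\w$ into the first block of \eqref{eq:ANF} gives $(\I-\LL)\uu=\cc+\Z\x+(\I+\LL)\w$; since $\LL$ is strictly lower triangular, $(\I-\LL)$ is unit lower triangular and hence invertible, and multiplying through by $(\I-\LL)^{-1}$ yields exactly $\uu=\tilde{\cc}+\tilde{\Z}\x+\tilde{\LL}\w$ in the notation of Definition~\ref{def:MandV}. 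Substituting $|\z|=\uu+\w$ together with this expression for $\uu$ into the second block of \eqref{eq:ANF} and collecting terms gives $\f(\x)=\tilde{\bb}+\tilde{\J}\x+\tilde{\Y}\w$. These two identities are the algebraic core of the theorem.

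With these identities I would then prove the MLCP equivalence \eqref{eq:newMLCP} in both directions. Forward: if $\f(\x)=\0$, take $\uu,\w$ as above; then the complementarity line holds as $\0\leq\w\perp\uu\geq\0$ with $\uu=\tilde{\cc}+\tilde{\Z}\x+\tilde{\LL}\w$, and the linear line holds as $\0=\f(\x)=\tilde{\bb}+\tilde{\J}\x+\tilde{\Y}\w$, so $(\x,\w)$ solves \eqref{eq:newMLCP}. Converse: given a solution $(\x,\w)$ of \eqref{eq:newMLCP}, set $\uu:=\tilde{\cc}+\tilde{\Z}\x+\tilde{\LL}\w$ and $\z:=\uu-\w$; the complementarity line gives $\0\leq\uu\perp\w\geq\0$, and the elementary fact that $|u_i-w_i|=u_i+w_i$ whenever $u_i,w_i\geq0$ and $u_iw_i=0$ yields $|\z|=\uu+\w$. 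Reversing the algebra of the previous paragraph shows $\z=\cc+\Z\x+\LL|\z|$, so by the uniqueness clause of Definition~\ref{def:ANF} this $\z$ is the vector attached to $\x$; hence $\f(\x)=\bb+\J\x+\Y|\z|=\tilde{\bb}+\tilde{\J}\x+\tilde{\Y}\w=\0$ by the linear line of \eqref{eq:newMLCP}.

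For the second statement, assume $m=n$ and $\tilde{\J}$ nonsingular. The linear block of \eqref{eq:newMLCP} is then equivalent to \eqref{eq:linear}, which uniquely determines $\x=-\tilde{\J}^{-1}(\tilde{\bb}+\tilde{\Y}\w)$. Substituting this into $\uu=\tilde{\cc}+\tilde{\Z}\x+\tilde{\LL}\w$ and collecting terms gives $\uu=\check{\cc}+\check{\bS}\w$, so the complementarity line of \eqref{eq:newMLCP} becomes exactly the LCP \eqref{eq:newLCP}. Conversely, given $\w$ solving \eqref{eq:newLCP} and $\x$ defined by \eqref{eq:linear}, the same substitution recovers a solution of \eqref{eq:newMLCP}, and the first part of the theorem gives $\f(\x)=\0$. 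Chaining these implications establishes the claimed equivalence.

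I do not expect a serious obstacle: once the change of variables is in place, the argument reduces to matching substituted expressions against the shorthand of Definition~\ref{def:MandV}. The only two points needing genuine care are (i) that $(\I-\LL)^{-1}$ exists and keeps $\tilde{\LL}$ unit lower triangular, which is immediate from strict lower triangularity, and (ii) the small but essential observation that $\z=\uu-\w$ with $\0\leq\uu\perp\w\geq\0$ forces $|\z|=\uu+\w$, which is precisely what lets the complementarity constraints faithfully encode the absolute values; the remaining bookkeeping confirming the collected coefficients equal $\tilde{\J},\tilde{\Y},\tilde{\Z},\tilde{\LL},\check{\cc},\check{\bS}$ is routine.
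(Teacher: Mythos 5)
Your proposal is correct and follows essentially the same route as the paper's proof: the same splitting $\z=\uu-\w$ with $\0\leq\uu\perp\w\geq\0$, the same derivation of $\uu=\tilde{\cc}+\tilde{\Z}\x+\tilde{\LL}\w$ and $\f(\x)=\tilde{\bb}+\tilde{\J}\x+\tilde{\Y}\w$, the same uniqueness argument via $\z=\cc+\Z\x+\LL|\z|$ for the converse, and the same substitution of $\x=-\tilde{\J}^{-1}(\tilde{\bb}+\tilde{\Y}\w)$ to obtain the LCP. The only difference is cosmetic: you spell out the intermediate step $(\I-\LL)\uu=\cc+\Z\x+(\I+\LL)\w$, which the paper leaves implicit.
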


\begin{proof}
  Consider some fixed $\x\in\reals^n$.

  For the ``only if'' part of the theorem's claim, consider the unique vector $\z$ that is consistent with \eqref{eq:ANF}, and define $\uu:=\max(\0,\z)$ and $\w:=\max(\0,-\z)$. It follows that $\z = \uu-\w$, $|\z|=\uu + \w$, and  $\0\leq\uu\perp\w\geq\0$.
  Thus, the top block row of \eqref{eq:ANF} becomes $\uu-\w = \cc + \Z\x + \LL(\uu+\w)$,
  and so
  \begin{equation}
  \label{eq:u}
    \uu=\tilde{\cc} + \tilde{\Z}\x + \tilde{\LL}\w.
  \end{equation}

  Beginning with \eqref{eq:ANF}, using the definitions of $\uu$ and $\w$ to eliminate $\z$ throughout, and then using \eqref{eq:u} to eliminate $\uu$, we conclude that $\w$  satisfies the following conditions simultaneously:
  \begin{align*}
    \f(\x) &= \tilde{\bb} + \tilde{\J}\x + \tilde{\Y}\w,\\
    \0 \leq \w &\perp \tilde{\cc} + \tilde{\Z}\x + \tilde{\LL}\w \ge \0.
  \end{align*}
  Hence, if $\f(\x)=\0$, then $(\x,\w)$ solves the MLCP~\eqref{eq:newMLCP} as claimed.

  Next, for the ``if'' part of the theorem's claim, suppose that $(\x,\w)$ solves the MLCP~\eqref{eq:newMLCP}. Define $\uu$ by \eqref{eq:u}, so that \eqref{eq:newMLCP} yields $\0\leq\uu\perp\w\geq\0$. Define $\z:=\uu-\w$; it follows that $\uu=\max(\0,\z)$ and $\w=\max(\0,-\z)$, and so $|\z|=\uu+\w$. Now, \eqref{eq:u} can be rearranged to yield $\uu-\w = \cc + \Z\x + \LL(\uu+\w)$, and so $\z = \cc + \Z\x + \LL|\z|$. Hence, $\z$ is the unique vector that is consistent with \eqref{eq:ANF}, and so
  \begin{align*}
   & \f(\x) \\
    &\quad= \bb + \J\x+\Y|\z| \\
    &\quad= \bb + \J\x + \Y(\uu+\w) \\
    &\quad= \tilde{\bb} + \tilde{\J}\x + \tilde{\Y}\w \\
    &\quad = \0,
  \end{align*}
as claimed.
  
  Next, if $m=n$ and $\tilde{\J}$ is nonsingular, then \eqref{eq:linear} uniquely specifies $\x$ as:
  \[
    \x = -\tilde{\J}^{-1}\tilde{\bb} - \tilde{\J}^{-1}\tilde{\Y}\w.
  \]
  Substituting this expression for $\x$ into the complementarity condition in the MLCP~\eqref{eq:newMLCP}, and noting that \eqref{eq:linear} is equivalent to the equation in \eqref{eq:newMLCP},  we obtain \eqref{eq:newLCP} as required.
\end{proof}

\yzedit{For instance, the following MLCP instance was constructed according to~\eqref{eq:newMLCP} for the function $f$ from Examples~\ref{ex:construction} and~\ref{ex:ConstructMartixAndVector}, and so its solutions provide roots of $f$.
\begin{align*}
0 &= -45 +  49 x + \begin{bmatrix}
    4 & 2 & 12
\end{bmatrix}\begin{bmatrix}
      w_1 \\ w_2 \\ w_3
  \end{bmatrix}, \\
  \0\leq \begin{bmatrix}
      w_1 \\ w_2 \\ w_3
  \end{bmatrix} &\perp \begin{bmatrix}
    \phantom{-}4\\ \phantom{-}3 \\ -8
  \end{bmatrix} + \begin{bmatrix}
    3 \\ 6 \\ 7
  \end{bmatrix}[x] + \begin{bmatrix}
      1 & 0 & 0 \\
      4 & 1 & 0\\
      0 & 0 & 1 
  \end{bmatrix}\begin{bmatrix}
      w_1 \\ w_2 \\ w_3
  \end{bmatrix} \geq \0.
\end{align*}
Alternatively, since $\tilde{\J}$ is nonsingular in this example, we can obtain a root of $f$ by solving:
\begin{equation*}
    49x = 45 -\begin{bmatrix}
        4 & 2 & 12
    \end{bmatrix}\begin{bmatrix}
        w_1 \\ w_2 \\ w_3
    \end{bmatrix},
\end{equation*}
where $\w$ solves the following LCP, derived from our new root-finding approach~\eqref{eq:newLCP}:
\begin{equation*}
    \0\leq \begin{bmatrix}
        w_1 \\ w_2 \\ w_3
    \end{bmatrix} \perp \begin{bmatrix}
        \phantom{-}6.8 \\ \phantom{-}8.5 \\ -1.6
    \end{bmatrix} + \begin{bmatrix}
        \phantom{-}0.76 & -0.12 &-0.74\\
        \phantom{-}3.50  & \phantom{-}0.76 & -1.50\\
        -0.57 & -0.28 & -0.71
    \end{bmatrix}\begin{bmatrix}
        w_1 \\ w_2 \\ w_3
    \end{bmatrix} \geq \0.
\end{equation*}}

Unlike~\eqref{eq:oldMLCP}, the MLCP~\eqref{eq:newMLCP} does not require $m=n$, and is equivalent to problem of solving $\f(\x)=\0$ for $\x$, with no additional requirements. For example, observe that $\J$ will be $\0$ if $\f$ is a linear combination of absolute-value terms, yet this situation violates the requirements of Proposition~\ref{prop:oldRoot}. Griewank et al.~\cite{GriewankStreubel} note that this particular situation may be rectified by a reformulation that is nontrivial to carry out, and results in a new abs-normal form with significantly larger coefficient matrices. However, this is unnecessary in our new MLCP~\eqref{eq:newMLCP}, which has no additional requirements at all. 

Moreover, if
$\tilde{\J}$ is square and nonsingular, then a root $\x$ of $\f$ may be determined by solving the
LCP~\eqref{eq:newLCP} followed by the linear equation
system~\eqref{eq:linear}. Indeed, in our experience, it seems that $\tilde{\J}$ is nonsingular whenever each term $|z_i|$ is meaningfully used in the construction of $\f$, though this notion seems difficult to formalize.

As the MLCP~\eqref{eq:newMLCP} is a
specialization of the
\emph{mixed complementarity problem} (MCP) discussed by Dirkse and Ferris~\cite{DirkseFerris}, \eqref{eq:newMLCP} may be approached numerically using the \textsf{PATH} solver~\cite{DirkseFerris, FerrisMunson} \yzedit{which implements a stabilized Newton method and nominally assumes the nonsingularity of the Jacobian matrix. The solver exhibits global convergence to an MCP solution when this nonsingularity condition is met.} \textsf{PATH} may also be applied to solve the LCP~\eqref{eq:newLCP} when $\tilde{\J}$ is square and nonsingular.
The following corollary of Theorem~\ref{thm:newMLCP} is immediate; definitions and properties of Q-matrices and P-matrices are
established in~\cite{CottlePangStone}.

\begin{corollary}
  Suppose that Assumption~\ref{ass:ANF} holds with $m=n$, and suppose the matrix $\tilde{\J}$ from Definition~\ref{def:MandV} is nonsingular. If $\check{\bS}$ is a Q-matrix, then there exists $\x\in\reals^n$ for
  which $\f(\x)=\0$.  If $\check{\bS}$ is a P-matrix, then there is exactly
  one such $\x$.
\end{corollary}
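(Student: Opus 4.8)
The plan is to obtain the corollary as a direct translation of Theorem~\ref{thm:newMLCP} into the language of special matrix classes, using the standard characterizations recorded in~\cite{CottlePangStone}: a matrix $\mathbf{M}$ is a \emph{Q-matrix} exactly when the LCP $\0\le\x\perp\mathbf{Mx}+\mathbf{q}\ge\0$ has at least one solution for every $\mathbf{q}$, and a \emph{P-matrix} exactly when that LCP has a unique solution for every $\mathbf{q}$. Since the hypotheses $m=n$ and $\tilde{\J}$ nonsingular are exactly those required by the second half of Theorem~\ref{thm:newMLCP}, I would start by recalling from that theorem that $\f(\x)=\0$ holds if and only if $\x$ satisfies~\eqref{eq:linear} for some $\w$ solving the LCP~\eqref{eq:newLCP}, i.e.\ $\0\le\w\perp\check{\cc}+\check{\bS}\w\ge\0$, and that~\eqref{eq:linear} together with nonsingularity of $\tilde{\J}$ pins down $\x=-\tilde{\J}^{-1}\tilde{\bb}-\tilde{\J}^{-1}\tilde{\Y}\w$ from $\w$.

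For the existence claim, I would assume $\check{\bS}$ is a Q-matrix and apply the Q-matrix property with right-hand side $\check{\cc}$ to produce a solution $\w$ of~\eqref{eq:newLCP}; the corresponding $\x$ from~\eqref{eq:linear} is then a root of $\f$ by Theorem~\ref{thm:newMLCP}.

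For the uniqueness claim, I would assume $\check{\bS}$ is a P-matrix. Since every P-matrix is a Q-matrix, existence follows from the previous paragraph. For uniqueness, the P-matrix property yields a \emph{unique} solution $\w^{*}$ of~\eqref{eq:newLCP}; by Theorem~\ref{thm:newMLCP} any root of $\f$ must equal~\eqref{eq:linear} for some solution $\w$ of~\eqref{eq:newLCP}, the only such $\w$ is $\w^{*}$, and nonsingularity of $\tilde{\J}$ then forces the root to be $\x=-\tilde{\J}^{-1}\tilde{\bb}-\tilde{\J}^{-1}\tilde{\Y}\w^{*}$. Hence $\f$ has exactly one root.

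No step is expected to present real difficulty, which is why the paper flags the corollary as immediate: the whole argument is a dictionary lookup between solvability/uniqueness of the LCP~\eqref{eq:newLCP} and existence/uniqueness of roots of $\f$, with Theorem~\ref{thm:newMLCP} doing all of the work. The only point that warrants any care is to invoke the equivalent LCP-solvability characterizations of Q- and P-matrices rather than their determinantal definitions; both equivalences are classical and available in~\cite{CottlePangStone}.
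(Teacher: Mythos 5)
Your proposal is correct and matches the paper's intent exactly: the paper states the corollary is immediate from Theorem~\ref{thm:newMLCP} together with the standard LCP characterizations of Q- and P-matrices in~\cite{CottlePangStone}, which is precisely the dictionary argument you spell out (Q-matrix gives solvability of~\eqref{eq:newLCP} with right-hand side $\check{\cc}$, P-matrix gives uniqueness of $\w$, and nonsingularity of $\tilde{\J}$ transfers existence/uniqueness to the root $\x$ via~\eqref{eq:linear}). No gaps; your write-up simply makes explicit what the paper leaves to the reader.
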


Observe that the above corollary places no constraints on $\bb$ or $\cc$, as these vectors are
not involved in the definitions of $\tilde{\J}$ or $\check{\bS}$.  

Griewank et al.~\cite{GriewankStreubel} emphasize the relative simplicity of the case in which $\LL=\0$; here  $\f$ is said to be \emph{simply switched}~\cite{GriewankStreubel}. The following corollary is the special case of Theorem~\ref{thm:newMLCP}
 for a simply switched function $f$. 

\begin{corollary}
  Suppose that Assumption~\ref{ass:ANF} holds with $m=n$, and that $\LL=\0$.  Then,
  $\f(\x)=\0$ if and only if there exists $\w\in\reals^s$ for which
  $(\x,\w)$ solves the following MLCP:
  \begin{equation*}
    \begin{array}{rcl}
      \0 &=& (\bb+\Y\cc) + (\J+\Y\Z)\x + 2\Y\w \\
      \0\leq \w &\perp& \cc + \Z\x + \w \geq \0.
    \end{array}
  \end{equation*}
\end{corollary}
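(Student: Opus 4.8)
The plan is to specialize Theorem~\ref{thm:newMLCP}'s MLCP~\eqref{eq:newMLCP} to the case $\LL = \0$ by evaluating the auxiliary quantities of Definition~\ref{def:MandV}. First I would note that $\LL = \0$ forces $(\I - \LL) = \I$ and $(\I - \LL)^{-1} = \I$, so that $\tilde{\cc} = \cc$, $\tilde{\Z} = \Z$, and $\tilde{\LL} = (\I - \LL)^{-1}(\I + \LL) = \I$. Substituting $\tilde{\LL} = \I$ into the remaining definitions gives $\tilde{\Y} = \Y(\I + \tilde{\LL}) = 2\Y$, $\tilde{\bb} = \bb + \Y\tilde{\cc} = \bb + \Y\cc$, and $\tilde{\J} = \J + \Y\tilde{\Z} = \J + \Y\Z$.

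Next I would substitute these identifications directly into~\eqref{eq:newMLCP}. The equation block $\0 = \tilde{\bb} + \tilde{\J}\x + \tilde{\Y}\w$ becomes $\0 = (\bb + \Y\cc) + (\J + \Y\Z)\x + 2\Y\w$, and the complementarity block $\0 \leq \w \perp \tilde{\cc} + \tilde{\Z}\x + \tilde{\LL}\w \geq \0$ becomes $\0 \leq \w \perp \cc + \Z\x + \w \geq \0$, which is exactly the displayed MLCP in the corollary. Theorem~\ref{thm:newMLCP} then supplies the equivalence with $\f(\x) = \0$ verbatim, since the hypotheses of the corollary (Assumption~\ref{ass:ANF}, and here also $m = n$) are a subset of what Theorem~\ref{thm:newMLCP} needs for the MLCP equivalence; in fact $m = n$ is not even required for that part, so it could be dropped, but I would keep it to match the statement.

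There is essentially no obstacle here: the corollary is a direct substitution into an already-proven result, and the only thing to be careful about is the arithmetic $(\I - \LL)^{-1}(\I + \LL) = \I$ when $\LL = \0$, together with tracking the factor of $2$ that appears in $\tilde{\Y}$ and in the coefficient of $\w$ in the complementarity condition ($\tilde{\LL} = \I$). The proof I would write is a single short paragraph: state that $\LL = \0$ yields $\tilde{\LL} = \I$, list the resulting simplifications of $\tilde{\cc}, \tilde{\Z}, \tilde{\Y}, \tilde{\bb}, \tilde{\J}$, and then invoke Theorem~\ref{thm:newMLCP}.
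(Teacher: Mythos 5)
Your proposal is correct and is essentially the paper's own argument: the corollary is stated there as the direct specialization of Theorem~\ref{thm:newMLCP}, and your computation that $\LL=\0$ gives $\tilde{\cc}=\cc$, $\tilde{\Z}=\Z$, $\tilde{\LL}=\I$, $\tilde{\Y}=2\Y$, $\tilde{\bb}=\bb+\Y\cc$, $\tilde{\J}=\J+\Y\Z$ reproduces exactly the displayed MLCP. Your side remark that $m=n$ is not needed for the MLCP equivalence is also accurate.
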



\section{New optimization formulations}
\label{sec:optim}

This section adopts a similar approach to the previous section in order to optimize $\mathcal{PA}$ functions with known abs-normal forms.
The following result shows that minimizing a function in abs-normal
form is equivalent to solving a \emph{linear program with complementarity
constraints (LPCC)} in the sense of \cite{HuMitchell, FukushimaPang, YuMitchellPang}.  This
approach complements the approach taken in 
\cite[Sections~4.1--4.2]{HuMitchell}, in which LPCC formulations are
provided for $\mathcal{PA}$ functions that satisfy certain structural
assumptions and have known affine pieces on known polyhedral subdomains. 

\subsection{Minimizing a function in abs-normal form}

\begin{theorem}
  \label{thm:LPCC}
  Suppose that Assumption~\ref{ass:ANF} holds with $m=1$, and consider the auxiliary matrices and vectors from Definition~\ref{def:MandV}.  A point
  $\x^\ast\in\reals^n$ is a (global) minimum of $f$ if and only if there
  exists $\w^\ast\in\reals^s$ for which $(\x^\ast,\w^\ast)$ solves the following LPCC:
  \begin{equation}
    \label{eq:LPCC}
    \begin{array}{cl}
      \displaystyle \min_{\x\in\reals^n,\w\in\reals^s} & \tilde{b} + \tilde{\J}\x + \tilde{\Y}\w \medskip\\
      \mathrm{s.t.} & \0 \leq \w\perp\tilde{\cc} + \tilde{\Z}\x + \tilde{\LL}\w\geq \0.
    \end{array}
  \end{equation}
\end{theorem}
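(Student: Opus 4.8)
The plan is to reuse the correspondence between points $\x\in\reals^n$ and feasible points of the complementarity system that was already established inside the proof of Theorem~\ref{thm:newMLCP}. Note first that the constraint of~\eqref{eq:LPCC} is precisely the complementarity condition of the MLCP~\eqref{eq:newMLCP}, and that the objective of~\eqref{eq:LPCC} is precisely the right-hand side of the equality line of~\eqref{eq:newMLCP}; with $m=1$ we have $\tilde{\bb}\in\reals$, $\tilde{\J}\in\reals^{1\times n}$, and $\tilde{\Y}\in\reals^{1\times s}$, so this objective is a genuine scalar affine function of $(\x,\w)$ and~\eqref{eq:LPCC} is indeed an LPCC. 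From the proof of Theorem~\ref{thm:newMLCP} I will extract two facts, neither of which uses the root condition $\f(\x)=\0$: \emph{(i)} for every $\x\in\reals^n$, if $\z$ is the unique vector consistent with~\eqref{eq:ANF} and $\w:=\max(\0,-\z)$, then $(\x,\w)$ is feasible for~\eqref{eq:LPCC} and its objective value equals $f(\x)$ (this is the content of the ``only if'' portion of that proof, namely~\eqref{eq:u} together with the displayed identity $f(\x)=\tilde{\bb}+\tilde{\J}\x+\tilde{\Y}\w$); and \emph{(ii)} conversely, every $(\x,\w)$ that is feasible for~\eqref{eq:LPCC} satisfies $f(\x)=\tilde{\bb}+\tilde{\J}\x+\tilde{\Y}\w$, i.e.\ its objective value equals $f(\x)$ (this is the ``if'' portion of that proof, with its final step $f(\x)=\0$ simply omitted). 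A useful consequence of \emph{(i)} and \emph{(ii)} together is that the image of the objective of~\eqref{eq:LPCC} over its (nonempty) feasible set coincides exactly with $f(\reals^n)$.

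Given facts \emph{(i)} and \emph{(ii)}, the equivalence is a two-line argument in each direction. For the ``only if'' direction, let $\x^\ast$ be a global minimum of $f$; by \emph{(i)} there is $\w^\ast$ with $(\x^\ast,\w^\ast)$ feasible for~\eqref{eq:LPCC} and objective value $f(\x^\ast)$, and any other feasible pair $(\x,\w)$ has objective value $f(\x)\ge f(\x^\ast)$ by \emph{(ii)}, so $(\x^\ast,\w^\ast)$ solves~\eqref{eq:LPCC}. For the ``if'' direction, let $(\x^\ast,\w^\ast)$ solve~\eqref{eq:LPCC}; its objective value is $f(\x^\ast)$ by \emph{(ii)}, and for an arbitrary $\x\in\reals^n$ fact \emph{(i)} yields a feasible pair $(\x,\w)$ of objective value $f(\x)$, which cannot be smaller than the optimum, so $f(\x)\ge f(\x^\ast)$; as $\x$ was arbitrary, $\x^\ast$ is a global minimum of $f$.

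There is no serious obstacle here; the only care required is bookkeeping. Specifically, I must verify that the two displayed identities quoted as \emph{(i)} and \emph{(ii)} above are valid for \emph{all} $\x$ and all feasible $(\x,\w)$, not only for roots of $\f$ --- which is clear on inspection of the proof of Theorem~\ref{thm:newMLCP}, since the root hypothesis is used there solely to conclude the terminal ``$=\0$''. It is also worth remarking explicitly that~\eqref{eq:LPCC} always has a nonempty feasible set (by \emph{(i)}), so that it possesses an optimal solution exactly when $f$ attains its infimum, and that the two problems then share the same optimal value.
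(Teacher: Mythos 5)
Your proposal is correct and follows essentially the same route as the paper's own proof: both rest on the observation (carried over from the proof of Theorem~\ref{thm:newMLCP}, where the root hypothesis is only used for the final ``$=\0$'') that for each $\x$ the unique $\z$ consistent with~\eqref{eq:ANF} yields a feasible pair with objective value $f(\x)$, and conversely every feasible $(\x,\w)$ of~\eqref{eq:LPCC} has objective value $f(\x)$. Your facts \emph{(i)} and \emph{(ii)} are exactly the two halves of the paper's argument, so nothing further is needed.
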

\begin{proof}
  As in the proof of Theorem~\ref{thm:newMLCP}, for each $\x\in\reals^n$, there is a unique vector $\w(\x)\in\reals^s$ that simultaneously satisfies:
  \begin{align*}
    f(\x) &= \tilde{b} + \tilde{\J}\x + \tilde{\Y}\w(\x),\\
    \0 \leq \w(\x) &\perp \tilde{\cc} + \tilde{\Z}\x + \tilde{\LL}\w(\x) \ge \0.
  \end{align*}

  For the ``only if'' claim of the theorem, suppose that $\x^\ast$ is a minimum of $f$,  choose $\z^\ast$ consistently with \eqref{eq:ANF} (with $\x^\ast$ in place of $\x$), and define $\w^\ast:=\max(\0,-\z^\ast)$. Proceeding similarly to the proof of Theorem~\ref{thm:newMLCP}, it follows that $(\x^\ast,\w^\ast)$ is feasible in \eqref{eq:LPCC}, with a corresponding objective value of $f(\x^\ast)$. Now, consider an arbitrary fixed $\x\in\reals^n$. If $\w$ satisfies $\0\leq \w\perp\tilde{\cc} + \tilde{\Z}\x + \tilde{\LL}\w\geq \0$, and if we define $\uu:=\tilde{\cc} + \tilde{\Z}\x + \tilde{\LL}\w$ and $\z:=\uu-\w$, it follows that $|\z|=\uu+\w$, and so the definition of $\uu$ yields:
  \[
    \z = \cc+\Z\x + \LL|\z|.
  \]
  Thus, $\z$ is consistent with \eqref{eq:ANF}, and so $f(\x)=b+\J\x+\Y|\z|=\tilde{b} + \tilde{\J}\x + \tilde{\Y}\w$. Hence, allowing $\x$ to vary, we have:
  \begin{align*}
    &\min_{\x\in\reals^n,\w\in\reals^s}\, \tilde{b} + \tilde{\J}\x + \tilde{\Y}\w \\ &\qquad\qquad\qquad\mathrm{s.t.} \quad \0 \leq \w\perp\tilde{\cc} + \tilde{\Z}\x + \tilde{\LL}\w\geq \0 \\
    &\qquad\geq \min_{\x\in\reals^n}\,\left[\inf_{\w\in\reals^s} \tilde{b} + \tilde{\J}\x + \tilde{\Y}\w \right.\\
    &\qquad\qquad\quad\qquad\left.\mathrm{s.t.} \quad \0 \leq \w\perp\tilde{\cc} + \tilde{\Z}\x + \tilde{\LL}\w\geq \0 \right] \\
    &\qquad = \min_{\x\in\reals^n} f(\x)\\
      &\qquad = f(\x^\ast),
  \end{align*}
  which has also been established as an attainable objective value for \eqref{eq:LPCC}. Hence $(\x^\ast,\w^\ast)$ solves \eqref{eq:LPCC} as claimed.

  For the ``if'' claim of the theorem, suppose that $(\x^\ast,\w^\ast)$ solves \eqref{eq:LPCC}.  Define $\uu^\ast$ by \eqref{eq:u} (with $\x^\ast$ in place of $\x$ and $\w^\ast$ in place of $\w$), so that the constraint of \eqref{eq:LPCC} yields $\0\leq\uu^\ast\perp\w^\ast\geq\0$. Define $\z^\ast:=\uu^\ast-\w^\ast$; it follows that $\uu^\ast=\max(\0,\z^\ast)$ and $\w^\ast=\max(\0,-\z^\ast)$, and so $|\z^\ast|=\uu^\ast+\w^\ast$. Now, \eqref{eq:u} can be rearranged to yield $\uu^\ast-\w^\ast = \cc + \Z\x^\ast + \LL(\uu^\ast+\w^\ast)$, and so $\z^\ast = \cc + \Z\x^\ast + \LL|\z^\ast|$. Hence, $\z^\ast$ is the unique vector that is consistent with \eqref{eq:ANF} in place of $\z$ when $\x$ is replaced by $\x^\ast$. Thus,
  \begin{align*}
    &f(\x^\ast) \\
    &\quad= b + \J\x^\ast+\Y|\z^\ast| \\
    &\quad= b + \J\x^\ast + \Y(\uu^\ast+\w^\ast) \\
    &\quad= \tilde{b} + \tilde{\J}\x^\ast + \tilde{\Y}\w^\ast,
  \end{align*}
  and so $f(\x^\ast)$ is the optimal objective value of \eqref{eq:LPCC}. Now consider an arbitrary $\x\in\reals^n$, choose $\z$ consistently with \eqref{eq:ANF}, and define $\uu:=\max(\0,\z)$ and $\w:=\max(\0,-\z)$. As in the proof of Theorem~\ref{thm:newMLCP}, it follows that $f(\x)=\tilde{b} + \tilde{\J}\x + \tilde{\Y}\w$ and $\0 \leq \w \perp \tilde{\cc} + \tilde{\Z}\x + \tilde{\LL}\w \ge \0$. Hence, $(\x,\w)$ is feasible in \eqref{eq:LPCC}, and $f(\x)$ is the corresponding objective value. Since $(\x^\ast,\w^\ast)$ solves \eqref{eq:LPCC} with a corresponding \emph{optimal} objective value of $f(\x^\ast)$, it follows that $f(\x^\ast)\leq f(\x)$, as required.
\end{proof}

\yzedit{For instance, the following LPCC system uses our new optimization formulation~\eqref{eq:LPCC} to globally minimize the $\mathcal{PA}$ function $f$ from Example~\ref{ex:construction}, we construct , along with the matrices and vectors established in Example~\ref{ex:ConstructMartixAndVector}:
\begin{align*}
\displaystyle &\min_{x\in\reals,\w\in\reals^3}\; -45 +  49 x + \begin{bmatrix}
    4 & 2 & 12
\end{bmatrix}\begin{bmatrix}
      w_1 \\ w_2 \\ w_3
  \end{bmatrix} \\
  &\mathrm{s.t.}\; \0\leq \begin{bmatrix}
      w_1 \\ w_2 \\ w_3
  \end{bmatrix} \perp \begin{bmatrix}
    \phantom{-}4\\ \phantom{-}3 \\ -8
  \end{bmatrix} + \begin{bmatrix}
    3 \\ 6 \\ 7
  \end{bmatrix}[x] + \begin{bmatrix}
      1 & 0 & 0 \\
      4 & 1 & 0\\
      0 & 0 & 1 
  \end{bmatrix}\begin{bmatrix}
      w_1 \\ w_2 \\ w_3
  \end{bmatrix} \geq \0.
\end{align*}}

 A numerical method for solving LPCCs is described
in~\cite{YuMitchellPang}. Alternatively, if bounds on $\x^\ast$ and
$\w^\ast$ are known, then, in the spirit of~\cite[Theorem~1]{Pardalos},
the above LPCC may be reformulated as a mixed-integer
linear program (MILP). 

\begin{corollary}
  Suppose that Assumption~\ref{ass:ANF} holds with $m=1$. Suppose some value $\mu>0$ is known, such that any solution $(\x^\dagger,\w^\dagger)$ of~\eqref{eq:LPCC}
  must satisfy both $\w^\dagger\leq \mu \e$ and $\tilde{\cc} + \tilde{\Z}\x^\dagger +
  \tilde{\LL}\w^\dagger \leq \mu \e$, where $\e\in\reals^m$ denotes a vector of ones.
  Then, $\x^\ast \in \reals^n$ is a minimum of $f$ if and only if, for some $\w^\ast$
  and $\y^\ast$, $(\x^\ast,\w^\ast,\y^\ast)$ solves the following MILP:
  \begin{equation}
    \label{eq:MILP}
    \begin{array}{cl}
      \displaystyle \min_{\x,\w,\y}& \tilde{b} + \tilde{\J}\x + \tilde{\Y}\w
                                \medskip\\
      \mathrm{s.t.} & \0 \leq \w \leq \mu \y, \\
      & \0 \leq \tilde{\cc} + \tilde{\Z}\x + \tilde{\LL}\w \leq \mu(\e-\y), \\
      & \y\in\{0,1\}^s.
    \end{array}
  \end{equation}
\end{corollary}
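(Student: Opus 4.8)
The plan is to reduce the MILP~\eqref{eq:MILP} to the LPCC~\eqref{eq:LPCC} from Theorem~\ref{thm:LPCC} by showing that the two problems have the same feasible objective values, so that $\x^\ast$ is a minimizer of $f$ if and only if it participates in a solution of~\eqref{eq:MILP}. The key observation is the standard big-M encoding of a complementarity condition: for fixed $\x$ and $\w$ with $\0\leq\w$ and $\0\leq\tilde{\cc}+\tilde{\Z}\x+\tilde{\LL}\w$, the complementarity $\w\perp(\tilde{\cc}+\tilde{\Z}\x+\tilde{\LL}\w)$ holds if and only if, for each index $i$, at least one of $w_i$ and $(\tilde{\cc}+\tilde{\Z}\x+\tilde{\LL}\w)_i$ is zero; and this disjunction is captured exactly by the existence of a binary vector $\y\in\{0,1\}^s$ with $w_i\leq\mu y_i$ and $(\tilde{\cc}+\tilde{\Z}\x+\tilde{\LL}\w)_i\leq\mu(1-y_i)$, provided $\mu$ is large enough that neither bound is artificially binding at a solution.

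First I would establish the forward direction of the feasible-set equivalence: if $(\x,\w)$ is feasible in~\eqref{eq:LPCC}, then for each $i$ set $y_i:=0$ if $w_i=0$ and $y_i:=1$ otherwise. Complementarity forces $(\tilde{\cc}+\tilde{\Z}\x+\tilde{\LL}\w)_i=0$ whenever $w_i>0$, i.e. whenever $y_i=1$, so $w_i\leq\mu y_i$ and $(\tilde{\cc}+\tilde{\Z}\x+\tilde{\LL}\w)_i\leq\mu(1-y_i)$ both hold once we invoke the hypothesised bounds $\w\leq\mu\e$ and $\tilde{\cc}+\tilde{\Z}\x+\tilde{\LL}\w\leq\mu\e$ (noting that these hypotheses are assumed for solutions of~\eqref{eq:LPCC}; one should be slightly careful here, since the corollary's bound hypothesis is stated for solutions of~\eqref{eq:LPCC}, so the argument is cleanest when restricted to the optimal points, which is all that is needed). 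Thus $(\x,\w,\y)$ is feasible in~\eqref{eq:MILP} with the same objective value. Conversely, if $(\x,\w,\y)$ is feasible in~\eqref{eq:MILP}, then for each $i$, since $y_i\in\{0,1\}$, either $y_i=0$, forcing $w_i=0$, or $y_i=1$, forcing $(\tilde{\cc}+\tilde{\Z}\x+\tilde{\LL}\w)_i=0$; in either case $w_i\,(\tilde{\cc}+\tilde{\Z}\x+\tilde{\LL}\w)_i=0$, and together with the nonnegativity constraints this gives $\0\leq\w\perp\tilde{\cc}+\tilde{\Z}\x+\tilde{\LL}\w\geq\0$, so $(\x,\w)$ is feasible in~\eqref{eq:LPCC} with the same objective. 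Hence the two problems share the same set of attainable objective values, and in particular the same optimal value; combining this with Theorem~\ref{thm:LPCC}, which identifies the minimizers of $f$ with the $\x$-components of solutions of~\eqref{eq:LPCC}, yields the claim.

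The main obstacle I anticipate is a matter of careful bookkeeping rather than deep difficulty: the corollary hypothesises the bound $\mu$ only for solutions of~\eqref{eq:LPCC}, not for all feasible points, so one must be attentive that the big-M relaxation does not accidentally cut off optimal solutions while possibly admitting some non-optimal feasible points of a relaxed problem. The clean way to handle this is to argue at the level of optimal values: show (i) every solution of~\eqref{eq:LPCC} lifts to a feasible point of~\eqref{eq:MILP} with equal objective (using the $\mu$-bound exactly where it is assumed), so $\mathrm{val}\eqref{eq:MILP}\leq\mathrm{val}\eqref{eq:LPCC}$; and (ii) every feasible point of~\eqref{eq:MILP} projects to a feasible point of~\eqref{eq:LPCC} with equal objective, so $\mathrm{val}\eqref{eq:LPCC}\leq\mathrm{val}\eqref{eq:MILP}$. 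Equality of optimal values then follows, and the correspondence of minimizers is inherited from Theorem~\ref{thm:LPCC} together with the objective-preserving lift and projection. A secondary point worth a sentence is that~\eqref{eq:MILP} is genuinely attained (so ``solves'' is meaningful): this follows because~\eqref{eq:LPCC} is attained by Theorem~\ref{thm:LPCC} whenever $f$ has a global minimum, and the lift in step (i) produces an attaining point of~\eqref{eq:MILP}; if $f$ has no global minimum then neither problem has a solution and the biconditional holds vacuously.
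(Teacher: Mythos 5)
Your proposal is correct and takes essentially the same route as the paper: the paper's entire proof is a one-sentence appeal to the standard big-M reformulation of the LPCC~\eqref{eq:LPCC}, with $y_i$ encoding which of $w_i$ or $[\tilde{\cc}+\tilde{\Z}\x+\tilde{\LL}\w]_i$ vanishes, and your lift/projection argument at the level of optimal values simply makes that explicit, combined with Theorem~\ref{thm:LPCC} exactly as the paper intends. Your treatment of the subtlety that the $\mu$-bound is hypothesized only for solutions of~\eqref{eq:LPCC} is in fact more careful than the paper's; the only quibble is your closing remark that the MILP cannot have a solution when $f$ has no global minimum, which does not follow (the big-M bounds can render~\eqref{eq:MILP} bounded and attained even when~\eqref{eq:LPCC} is unbounded below), but that edge case is equally unaddressed by the paper's own one-line proof.
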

\begin{proof}
Under the hypotheses of this corollary, \eqref{eq:MILP} is a standard ``big-M'' reformulation of the LPCC~\eqref{eq:LPCC}; the $\supth{i}$ component of the binary vector $\y$  encodes which of $w_i$ or $[\tilde{\cc}+\tilde{\Z}\x+\tilde{\LL}\w]_i$ is zero under the complementarity constraint of \eqref{eq:MILP}.
\end{proof}

\subsection{Existence of minima}

As defined in~\cite{RockafellarWets}, \emph{horizon functions} will be 
shown to characterize the existence of unconstrained global minima for
piecewise affine functions. First we establish some basic properties of horizon functions for $\mathcal{PA}$ functions, and then we use these properties to obtain an MLCP that characterizes existence of a global minimum.

\begin{Definition}[adapted from \cite{RockafellarWets}]
  Given a continuous function $f:\reals^n\to\reals$, the \emph{horizon
  function} for $f$ is the mapping:
\begin{align*}
  &f^\infty:\reals^n\to\bar{\reals}: \\
  &\quad \bm\xi\mapsto \lim_{\delta\downTo{0}} 
    \left(\inf\{\lambda f(\tfrac{\x}{\lambda}):
      \quad 0<\lambda<\delta,\quad \|\x-\bm\xi\|<\delta\}\right).
\end{align*}
\end{Definition}

Any horizon function $f^\infty$ is positively
homogeneous~\cite[Theorem~3.2.1]{RockafellarWets}; thus,
$f^\infty(0)=0$ and
$\inf_{\bm\xi\in\reals^n}f^\infty(\bm\xi)\in\{-\infty,0\}$.  \yzedit{Intuitively, a horizon function describes how $f(\x)$ behaves when $\|\x\|$ is large, and the graph of $f^\infty$ is essentially a ``highly zoomed-out'' variant of the graph of $f$. Figure~\ref{fig:horizon} illustrates the horizon function of a particular $\mathcal{PA}$ function:
  \begin{equation}
    \label{eq:horizon}
    f(x) = x + \Big|2|x - 2| - 10\Big| + 20.
  \end{equation}} The following lemma relates horizon function behavior to the existence of a global minimum
when $f$ is $\mathcal{PA}$.
\begin{figure}[h]
    \centering
    \includegraphics[width=0.5\linewidth]{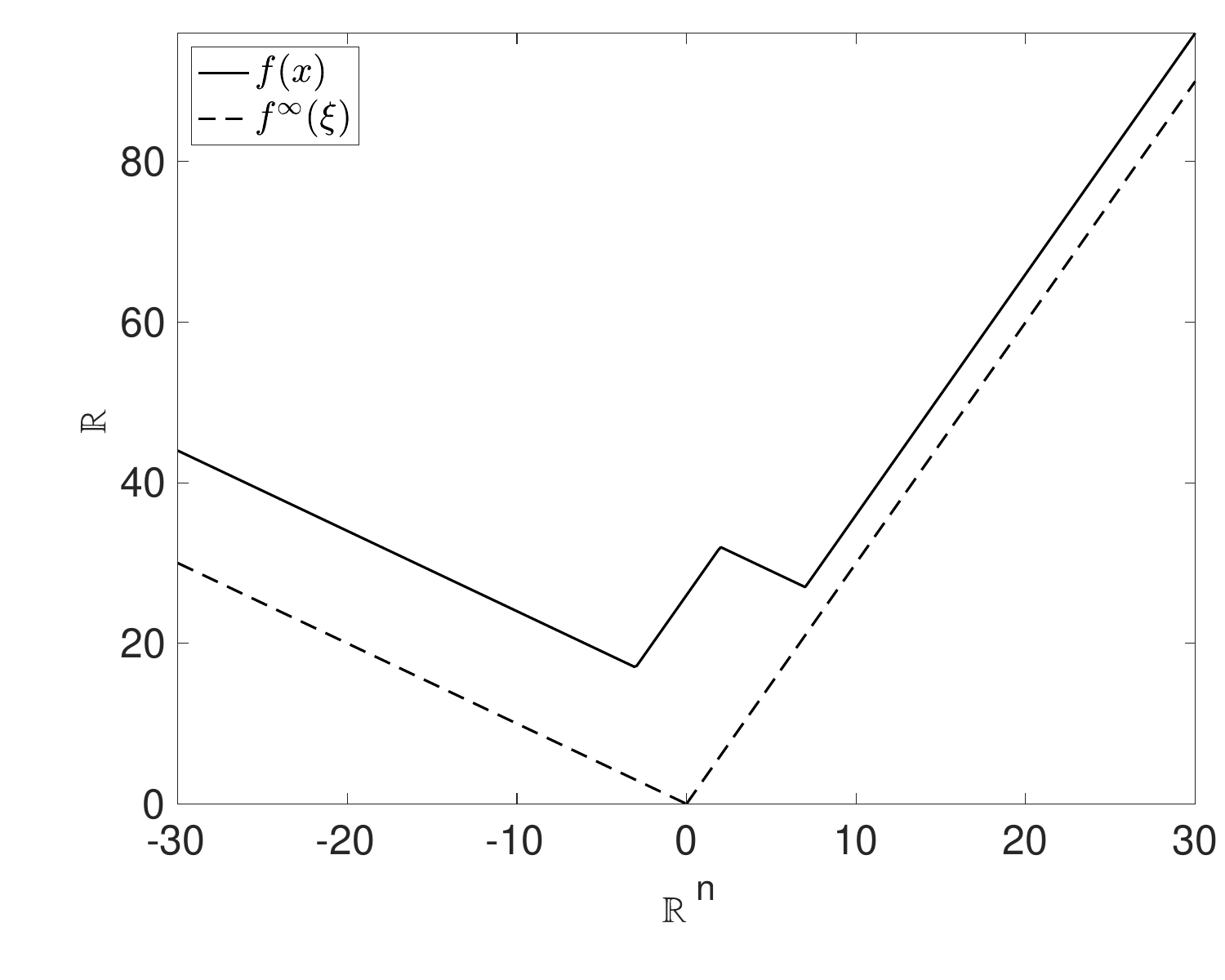}
    \caption{The horizon function $f^\infty$ (dashed) for the piecewise affine function $f$ given by \eqref{eq:horizon} (solid).}
    \label{fig:horizon}
\end{figure}

\begin{lemma}
\label{lem:infHF}
  Suppose that $f:\reals^n\to\reals$ is $\mathcal{PA}$, and choose
  a set $X\subset\reals^n$ for which $0\in\interior{X}$.  The function
  $f$ has a (global) minimum in $\reals^n$ if and only if
  $\inf_{\bm\xi\in X}f^\infty(\bm\xi)=0$.
\end{lemma}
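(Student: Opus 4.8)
The plan is to reduce the lemma to the single assertion that, for a $\mathcal{PA}$ function $f$, the quantity $\inf_{\reals^n}f$ is finite (and then automatically attained) if and only if $f^\infty(\bm\xi)\ge 0$ for every $\bm\xi\in\reals^n$, and then to transfer between the conditions ``$f^\infty\ge 0$ on $\reals^n$'' and ``$\inf_{\bm\xi\in X}f^\infty(\bm\xi)=0$'' using positive homogeneity of $f^\infty$ together with $0\in\interior{X}$. I would first record two consequences of $f$ being $\mathcal{PA}$, hence globally Lipschitz: for any direction $\bm\xi$ the ray $\{t\bm\xi:t\ge 0\}$ eventually lies in a single affine piece, so $f^\infty(\bm\xi)=\lim_{t\to\infty}f(t\bm\xi)/t$ exists (the Lipschitz bound lets one drop the neighbourhood in $\bm\xi$ that appears in the definition of $f^\infty$); and $f^\infty$ is translation invariant, i.e.\ $f^\infty=g^\infty$ whenever $g(\x)=f(\x+\x_0)$, since $f(t\bm\xi+\x_0)$ and $f(t\bm\xi)$ differ by at most $L\|\x_0\|$, where $L$ is a Lipschitz constant of $f$.

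For the ``only if'' direction, I would suppose $f$ attains a global minimum, so $f\ge M$ on $\reals^n$ for some $M\in\reals$. Then, directly from the definition of $f^\infty$, for each $\bm\xi$ and each $\delta>0$ the infimum of $\lambda f(\x/\lambda)$ over $0<\lambda<\delta$, $\|\x-\bm\xi\|<\delta$ is at least $\inf\{\lambda M:0<\lambda<\delta\}=\min(0,\delta M)$, which tends to $0$ as $\delta\to 0^+$; hence $f^\infty(\bm\xi)\ge 0$, in particular for all $\bm\xi\in X$. Since $0\in\interior{X}\subseteq X$ and $f^\infty(0)=0$, it follows that $\inf_{\bm\xi\in X}f^\infty(\bm\xi)=0$.

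For the ``if'' direction, I would suppose $\inf_{\bm\xi\in X}f^\infty(\bm\xi)=0$, so $f^\infty\ge 0$ on $X$; scaling an arbitrary nonzero $\bm\eta$ into $\interior{X}$ and using positive homogeneity then gives $f^\infty\ge 0$ on all of $\reals^n$. By Scholtes~\cite{Scholtes}, fix a finite family of closed polyhedra $P_1,\dots,P_k$ covering $\reals^n$ on which $f$ coincides with affine functions $\x\mapsto\transpose{\mathbf{a}_j}\x+c_j$, so that $\inf_{\reals^n}f$ equals the minimum, over those $j$ with $P_j\neq\emptyset$, of $\inf_{\x\in P_j}(\transpose{\mathbf{a}_j}\x+c_j)$. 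If this were $-\infty$, some nonempty $P_j$ would have $\inf_{\x\in P_j}(\transpose{\mathbf{a}_j}\x+c_j)=-\infty$, so by linear programming there would be $\x_0\in P_j$ and a direction $\mathbf{d}$ in the recession cone of $P_j$ with $\transpose{\mathbf{a}_j}\mathbf{d}<0$; then $\x_0+t\mathbf{d}\in P_j$ for all $t\ge 0$, so $f(\x_0+t\mathbf{d})=\transpose{\mathbf{a}_j}(\x_0+t\mathbf{d})+c_j$, and translation invariance of $f^\infty$ together with $f^\infty(\mathbf{d})=\lim_{t\to\infty}f(\x_0+t\mathbf{d})/t$ would give $f^\infty(\mathbf{d})=\transpose{\mathbf{a}_j}\mathbf{d}<0$, contradicting $f^\infty\ge 0$. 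Hence $\inf_{\reals^n}f$ is finite, so each $\inf_{\x\in P_j}(\transpose{\mathbf{a}_j}\x+c_j)$ is finite and therefore attained (a linear function bounded below on a nonempty polyhedron attains its minimum), and taking $j$ minimizing this value yields a global minimizer of $f$.

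The hard part will be the contradiction in the last paragraph: the definition of $f^\infty$ only probes rays emanating from the origin, so it does not obviously detect unboundedness of $f$ that occurs along a ray not through the origin — for example inside a thin polyhedral piece whose recession cone is proper. The remedy I would use is to extract a genuine recession direction from the linear-programming structure of a single affine piece, and then invoke translation invariance of $f^\infty$ (valid because $f$ is Lipschitz) to slide the offending ray so that it passes through the origin, where the definition of $f^\infty$ applies directly. The remaining ingredients — positive homogeneity of $f^\infty$, the elementary limit $\min(0,\delta M)\to 0$, and attainment of bounded-below linear programs — are routine.
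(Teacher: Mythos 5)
Your argument is correct, and its ``only if'' half coincides with the paper's; the ``if'' half reaches the same contradiction as the paper but by a noticeably different technical route. The paper works directly with the $\liminf$ definition of $f^\infty$: it invokes Scholtes' Minkowski decomposition of each polyhedral piece $P^i=C^i+R^i$ (compact set plus recession cone), bounds $f^\infty(\mathbf{r})$ from above by restricting the infimum in the definition to points with $\x/\lambda\in P^i$ (feasible precisely because $\mathbf{r}$ is a recession direction), concludes that every recession direction has nonnegative slope, and then obtains attainment from compactness of $C^i$ after splitting $\inf_{P^i}$ into the compact and conic parts. You instead first establish, via global Lipschitz continuity of a $\mathcal{PA}$ function, the ray-limit formula $f^\infty(\bm\xi)=\lim_{t\to\infty}f(t\bm\xi)/t$ together with translation invariance of $f^\infty$, and then lean on two textbook linear-programming facts (unboundedness of a linear function over a nonempty polyhedron yields a recession direction of negative slope; a bounded-below linear program over a polyhedron attains its infimum). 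Your translation-invariance device is exactly what compensates for the definition of $f^\infty$ probing only rays through the origin, which the paper handles instead by its restricted-infimum computation at $\mathbf{r}$; both resolutions are sound. What your route buys is a clean, reusable characterization of $f^\infty$ for Lipschitz functions and a reduction of the attainment step to standard LP theory, avoiding the compact-plus-cone decomposition; what the paper's route buys is self-containedness relative to the horizon-function definition (no ray-limit or translation-invariance lemma needed) and a direct appeal to the Scholtes results it already cites. Minor polish if you write this up: cite (or briefly prove) that continuous $\mathcal{PA}$ functions are globally Lipschitz, and note explicitly that a ray from the origin is covered by finitely many closed intervals (its intersections with the pieces), so at least one intersection is unbounded, which is what makes the ray-limit exist.
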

\begin{proof}
  For the ``only if'' case, suppose that $\x^\ast\in\reals^n$ is a minimum of $f$, and let
  $\phi:=f(\x^\ast)$.  Thus, for each $\x\in\reals^n$, $\delta>0$, and
  $\lambda\in(0,\delta)$, we have
$\lambda f(\tfrac{\x}{\lambda}) \geq \lambda\phi \geq -\delta|\phi|$.
It follows that, for each $\bm\xi\in X$,
\begin{align*}
  f^\infty(\bm\xi)
  &= \lim_{\delta\downTo{0}}\left(\inf\{\lambda f(\tfrac{\x}{\lambda}):
      0<\lambda<\delta, \|\x-\bm\xi\|<\delta\}\right) \\
  &\geq \lim_{\delta\downTo{0}}(-\delta|\phi|) = 0 = f^\infty(\0),
\end{align*}
and so $\inf_{\bm\xi\in X}f^\infty(\bm\xi)=0$.

\yzedit{Next, for the ``if'' case, suppose that $\inf_{\bm\xi\in X}f^\infty(\bm\xi)=0$, which implies $f^\infty(\bm\xi)\geq 0$ for each $\bm\xi\in X$.} Since $f^\infty$ is positively homogeneous according to
\cite[Theorem~3.21]{RockafellarWets}, and since $0\in\interior{X}$, it
follows that $f^\infty(\bm\xi)\geq 0$ for each $\bm\xi\in\reals^n$.
By~\cite[Proposition~2.2.3]{Scholtes}, there exists a finite collection
of polyhedra $P^1,\ldots,P^p\subset\reals^n$ for which $f$ is affine
on each $P^i$.  Thus, for any $i\in\{1,\ldots,p\}$, the restriction of
$f$ to $P^i$ may be
described as $f^i:y\in P^i\mapsto\innerProd{\a^i}{\y}+b^i$ for some
$\a^i\in\reals^n$ and $\yzedit{b^i}\in\reals$.  By~\cite[Theorem~2.1.2]{Scholtes},
each $P^i$ may be expressed as a Minkowski sum $C^i+R^i$, where
$C^i\subset\reals^n$ is compact and $R^i$ (the \emph{recession cone}
of $P^i$) is a polyhedral cone.  

Suppose, to obtain a contradiction, that
$\innerProd{\a^i}{\br}<0$ for some $i$ and some $\br\in R^i$.  Then:
\begin{align*}
&f^\infty(\br)\\
&\quad\leq \lim_{\delta\downTo{0}}\left(\inf\{\lambda f(\tfrac{\x}{\lambda}): \right.\\
&\qquad\qquad\qquad\quad \left.0<\lambda<\delta,\quad \|\x-\br\|<\delta,\quad (\tfrac{\x}{\lambda})\in P^i\}\right) \\
&\quad= \lim_{\delta\downTo{0}}\left(\inf\{\innerProd{\a^i}{\x} +
  \lambda b^i:\right. \\
&\qquad\qquad\qquad\quad \left. 0<\lambda<\delta,\quad \|\x-\br\|<\delta,\quad (\tfrac{\x}{\lambda})\in P^i\}\right) \\
&\quad=\innerProd{\a^i}{\br}<0,
\end{align*}
which contradicts the assumption that $f^\infty(\bm\xi)\geq 0$ for each
$\bm\xi\in\reals^n$.  Thus, $\innerProd{\a^i}{\br}\geq 0$ for each $i\in\{1,\ldots,p\}$
and each $\br\in R^i$.  Observe, moreover, that $\0\in R^i$. Lastly,
since each $C^i$ is compact, there exists $\gamma^i\in\arg\min_{\x\in
  C^i}f(\x)$.  Combining these observations,
\begin{align*}
  &\inf_{\x\in\reals^n}f(\x) \\
  &=\min_{i\in\{1,\ldots,p\}}\left(b^i+\inf_{\cc\in
  C^i}\innerProd{\a^i}{\cc}+\inf_{\br\in
  R^i}\innerProd{\a^i}{\br}\right)\\
  &=\min_{i\in\{1,\ldots,p\}}\left(b^i + \innerProd{\a^i}{\bm\gamma^i} + 0\right) \\
  &=\min_{i\in\{1,\ldots,p\}} f(\bm\gamma^i).
\end{align*}
Thus, for some $i\in\{1,\ldots,p\}$, $\bm\gamma^i$ is a global minimum of $f$.
\end{proof}

The above lemma depends heavily on the $\mathcal{PA}$ assumption; it
is readily verified that the exponential mapping $x\mapsto e^x$ has a
nonnegative horizon function but does not have a minimum in
$\reals$.  The following lemma shows that horizon functions are
readily described for $\mathcal{PA}$ functions with known abs-normal forms.

\begin{lemma}
\label{lem:existOfmin}
    Suppose that $f:\reals^n \rightarrow \reals$ is $\mathcal{PA}$. The function $f$ has a global minimum in $\mathbb{R}^n$ if and only if the equation system $f^\infty(\bm\xi) + 1 = 0$ has no solution $\bm\xi\in\reals^n$.
\end{lemma}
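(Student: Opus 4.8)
The plan is to reduce the claim to Lemma~\ref{lem:infHF} together with positive homogeneity of $f^\infty$, after first checking that $f^\infty$ is finite-valued. Concretely, I would first invoke Lemma~\ref{lem:infHF} with the choice $X=\reals^n$, which trivially satisfies $0\in\interior{\reals^n}$, to obtain that $f$ has a global minimum if and only if $\inf_{\bm\xi\in\reals^n}f^\infty(\bm\xi)=0$. Since $f^\infty$ is positively homogeneous, $f^\infty(0)=0$, so this infimum is always nonpositive; hence it equals $0$ precisely when $f^\infty(\bm\xi)\ge 0$ for every $\bm\xi\in\reals^n$. It therefore suffices to prove that $f^\infty\ge 0$ everywhere on $\reals^n$ if and only if the equation $f^\infty(\bm\xi)+1=0$ has no solution $\bm\xi\in\reals^n$.

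The forward direction of this last equivalence is immediate (a solution point has horizon value $-1<0$), so the work lies in the converse, which I would establish by contraposition: assuming $f^\infty(\bm\xi_0)<0$ for some $\bm\xi_0\in\reals^n$, I must exhibit a $\bm\xi$ with $f^\infty(\bm\xi)=-1$. The key subtlety is that $f^\infty(\bm\xi_0)$ is a \emph{finite} negative number rather than $-\infty$. To see this, recall that $f$, being $\mathcal{PA}$, is globally Lipschitz on $\reals^n$~\cite{Scholtes}, say with constant $L\ge 0$; then $|\lambda f(\x/\lambda)|\le\lambda|f(0)|+L\|\x\|$ for every $\lambda>0$ and $\x\in\reals^n$, so the set appearing inside the limit in the definition of $f^\infty(\bm\xi_0)$ is contained in the interval $[-\delta|f(0)|-L(\|\bm\xi_0\|+\delta),\ \delta|f(0)|+L(\|\bm\xi_0\|+\delta)]$, and letting $\delta\downTo 0$ gives $|f^\infty(\bm\xi_0)|\le L\|\bm\xi_0\|<\infty$. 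Consequently $\lambda_0:=-1/f^\infty(\bm\xi_0)$ is a well-defined strictly positive scalar, and positive homogeneity of $f^\infty$ gives $f^\infty(\lambda_0\bm\xi_0)=\lambda_0 f^\infty(\bm\xi_0)=-1$; taking $\bm\xi:=\lambda_0\bm\xi_0$ completes the contrapositive and hence the lemma.

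I expect the only genuinely nonroutine step to be ruling out $f^\infty(\bm\xi_0)=-\infty$, which is exactly where the $\mathcal{PA}$ hypothesis does real work beyond its role in Lemma~\ref{lem:infHF}. I handle it above via global Lipschitzness of $\mathcal{PA}$ functions, but an alternative would be to reuse the polyhedral decomposition $P^i=C^i+R^i$ from the proof of Lemma~\ref{lem:infHF}: the same estimate used there shows $f^\infty(\br)\le\innerProd{\a^i}{\br}<\infty$ for each $\br\in R^i$, and since the recession cones $R^i$ cover $\reals^n$ this again yields finiteness of $f^\infty$. Everything else is routine bookkeeping with positive homogeneity and the sign of the infimum.
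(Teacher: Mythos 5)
Your proof is correct and follows essentially the same route as the paper: both arguments combine Lemma~\ref{lem:infHF} (with the infimum over all of $\reals^n$) with positive homogeneity of $f^\infty$ to convert ``some negative horizon value exists'' into ``the value $-1$ is attained.'' The only difference is that you explicitly rule out $f^\infty(\bm\xi_0)=-\infty$ via global Lipschitzness of $\mathcal{PA}$ functions, a point the paper leaves implicit; this is a valid and slightly more careful rendering of the same argument.
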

\begin{proof}
  Since $f^\infty$ is positively homogeneous, there exists $\x\in\reals^n$ for which $f^\infty(\x)<0$ if and only if there exists $\bm\xi\in\reals^n$ for which $f^\infty(\bm\xi)+1=0$. At this point, the lemma follows immediately from Lemma~\ref{lem:infHF}.
\end{proof}

\begin{lemma}
\label{lem:absHF}
  Suppose that Assumption~\ref{ass:ANF} holds with $m=1$. The horizon function
  $f^\infty$ is $\mathcal{PL}$, and has the abs-normal form:
  \[
\begin{bmatrix}
  \bm\zeta \\ f^\infty(\bm\xi)
\end{bmatrix}
\equiv
\begin{bmatrix}
  \Z & \LL \\ \J & \Y
\end{bmatrix}
\begin{bmatrix}
  \bm\xi \\ |\bm\zeta|
\end{bmatrix}.
  \]
\end{lemma}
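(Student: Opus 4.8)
The plan is to introduce the $\mathcal{PL}$ function $g:\reals^n\to\reals$ defined \emph{by} the abs-normal form in the statement — namely, for each $\bm\xi\in\reals^n$ let $\bm\zeta(\bm\xi)$ be the unique vector with $\bm\zeta = \Z\bm\xi + \LL|\bm\zeta|$ (which exists and is unique by the row-by-row solution using the strict lower triangularity of $\LL$, exactly as in the observation following Definition~\ref{def:ANF}), and set $g(\bm\xi):=\J\bm\xi + \Y|\bm\zeta(\bm\xi)|$ — and then to prove that $f^\infty\equiv g$. By construction $g$ is a composition of linear maps and absolute-value operations, hence $\mathcal{PA}$ by~\cite{Scholtes}, and since it carries no constant terms it is positively homogeneous, hence $\mathcal{PL}$. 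Thus once $f^\infty=g$ is shown, the lemma is immediate, the displayed abs-normal form being precisely the one used to define $g$.

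The first step is the pointwise limit $\lim_{\lambda\downarrow 0}\lambda f(\bm\xi/\lambda)=g(\bm\xi)$ for each fixed $\bm\xi$. For $\lambda>0$, let $\z^\lambda$ be the vector consistent with~\eqref{eq:ANF} at $\x=\bm\xi/\lambda$ and put $\bm\zeta^\lambda:=\lambda\z^\lambda$. Multiplying the top block of~\eqref{eq:ANF} by $\lambda$ gives $\bm\zeta^\lambda=\lambda\cc+\Z\bm\xi+\LL|\bm\zeta^\lambda|$, and multiplying the bottom block by $\lambda$ gives $\lambda f(\bm\xi/\lambda)=\lambda b + \J\bm\xi + \Y|\bm\zeta^\lambda|$ (using $|\lambda\z^\lambda|=\lambda|\z^\lambda|$). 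An induction on the row index $i$ — structurally identical to the observation after Definition~\ref{def:ANF} — shows $\bm\zeta^\lambda\to\bm\zeta(\bm\xi)$ componentwise as $\lambda\downarrow 0$: the $\supth{i}$ row reads $\zeta_i^\lambda=\lambda c_i + (\Z\bm\xi)_i + \sum_{j<i}L_{ij}|\zeta_j^\lambda|$, and as $\lambda\downarrow 0$ the term $\lambda c_i$ vanishes while the remaining terms converge by the inductive hypothesis and continuity of $|\cdot|$. Hence $\lambda f(\bm\xi/\lambda)=\lambda b + \J\bm\xi + \Y|\bm\zeta^\lambda|\to\J\bm\xi+\Y|\bm\zeta(\bm\xi)|=g(\bm\xi)$.

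The second step promotes this to $f^\infty(\bm\xi)=g(\bm\xi)$ by absorbing the spatial perturbation in the definition of $f^\infty$. Since $f$ is $\mathcal{PA}$ it is globally Lipschitz with some constant $K$~\cite{Scholtes}, so for any $\x$ with $\|\x-\bm\xi\|<\delta$ and any $\lambda\in(0,\delta)$ we have $|\lambda f(\x/\lambda)-\lambda f(\bm\xi/\lambda)|\leq\lambda K\|\x/\lambda-\bm\xi/\lambda\|=K\|\x-\bm\xi\|<K\delta$. Taking $\x=\bm\xi$ for the upper bound and using the estimate above for the lower bound yields the squeeze
\[
  \inf_{0<\lambda<\delta}\lambda f(\bm\xi/\lambda) - K\delta \;\leq\; \inf\bigl\{\lambda f(\tfrac{\x}{\lambda}):\ 0<\lambda<\delta,\ \|\x-\bm\xi\|<\delta\bigr\} \;\leq\; \inf_{0<\lambda<\delta}\lambda f(\bm\xi/\lambda).
\]
By the pointwise limit of the first step, $\inf_{0<\lambda<\delta}\lambda f(\bm\xi/\lambda)\to g(\bm\xi)$ as $\delta\downarrow 0$, so letting $\delta\downarrow 0$ in the display gives $f^\infty(\bm\xi)=g(\bm\xi)$, as desired.

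The main obstacle is purely one of careful bookkeeping, not depth: one must keep the row-by-row continuity argument for $\bm\zeta^\lambda\to\bm\zeta(\bm\xi)$ airtight, and must interleave the inner infimum over $\lambda$ with the outer limit over $\delta$ correctly — the squeeze above is what makes this rigorous rather than informal, and the global Lipschitz property of $\mathcal{PA}$ functions is exactly what is needed to dispose of the $\|\x-\bm\xi\|<\delta$ neighborhood uniformly. A minor point worth stating explicitly is that the equations defining $g$ have vanishing constant vectors, so they coincide with the claimed abs-normal form, and $g$ being $\mathcal{PA}$ and positively homogeneous is $\mathcal{PL}$.
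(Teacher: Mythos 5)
Your proposal is correct, and it reaches the result by a route that is structurally similar to the paper's at its core (scale the abs-normal form at $\x=\bm\xi/\lambda$ by $\lambda$ and let $\lambda\downarrow 0$) but is justified quite differently and, in fact, more self-containedly. The paper leans on Scholtes at two points: it cites his result that the recession function of a $\mathcal{PA}$ function is $\mathcal{PL}$, and it uses his observation that for fixed $\bm\xi$ a single affine piece (hence a fixed signature matrix $\bm\Sigma$ with $|\z(\bm\xi/\lambda)|=\bm\Sigma\,\z(\bm\xi/\lambda)$) is active for all sufficiently small $\lambda>0$, which lets it pass to the limit in the scaled abs-normal form in one step. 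You avoid the fixed-signature argument entirely by exploiting the strict lower triangularity of $\LL$: your row-by-row induction shows $\bm\zeta^\lambda=\lambda\z^\lambda\to\bm\zeta(\bm\xi)$ using only continuity of $|\cdot|$, which is arguably cleaner and needs no appeal to eventual constancy of the active piece. You also do something the paper's proof glosses over: the definition of $f^\infty$ involves an infimum over a shrinking spatial neighborhood of $\bm\xi$ as well as over $\lambda$, and your Lipschitz squeeze (using the global Lipschitz continuity of $\mathcal{PA}$ functions) explicitly reduces this to the pointwise limit $\lim_{\lambda\downarrow 0}\lambda f(\bm\xi/\lambda)$, whereas the paper implicitly relies on Scholtes's characterization of the recession function. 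Finally, you obtain the $\mathcal{PL}$ property of $f^\infty$ directly from your constructed $g$ rather than by citation; here a slightly more direct justification than ``positively homogeneous $\mathcal{PA}$ implies $\mathcal{PL}$'' is available and worth stating: on each signature region one has $\bm\zeta=(\I-\LL\bm\Sigma)^{-1}\Z\bm\xi$, so every selection function of $g$ is genuinely linear because the constant vectors vanish. In short: no gap; your argument trades citations to Scholtes for an elementary triangular induction plus a Lipschitz estimate, at the cost of a somewhat longer write-up.
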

\yzedit{(Here, $\bm\zeta/\bm\xi$ play the same role as $\z/\x$ did in~\eqref{def:ANF}. We use differing notation here to distinguish a $\mathcal{PA}$ function $f$ from its horizon function $f^\infty$).}
\begin{proof}
  The piecewise linearity of $f^\infty$ was established by Scholtes~\cite[Proposition~2.5.1]{Scholtes}; Scholtes calls the horizon function the ``recession function''.

Now, for any $\x\in\reals^n$, let $\z(\x)$ denote the unique value of $\z$ that is consistent with \eqref{eq:ANF}. Suppose $\x\in\reals^n$ is fixed. Altering notation for consistency, Scholtes notes that there is some small $\nu>0$ for which, for each $\lambda\in\reals$ with $\nu>\lambda>0$,
  \[
    \lambda f(\tfrac{\x}{\lambda}) = \transpose{\mathbf{a}}\x,
  \]
  where $\y\mapsto\transpose{\mathbf{\a}}\y+b$ is a particular affine piece of $f$ (independent of $\lambda$) that is active at $\frac{1}{\nu}\x$. Hence, there is some fixed diagonal matrix $\bm{\Sigma}\in\reals^{s\times s}$ for which $|\z(\frac{1}{\lambda}\x)|=\bm{\Sigma}\,\z(\frac{1}{\lambda}\x)$ whenever $\nu>\lambda>0$. At this point, if we start with \eqref{eq:ANF}, replace $\x$ with $\frac{1}{\lambda}\x$ and $\z$ with $\z(\frac{1}{\lambda}\x)$,   note that $|\z(\frac{1}{\lambda}\x)|=\bm\Sigma\,\z(\frac{1}{\lambda}\x)$ for all sufficiently small $\lambda>0$, and take the limit $\lambda\to 0^+$, we obtain \yzedit{this lemma's claim.}
\end{proof}

As described by Scholtes~\cite{Scholtes}, given a $\mathcal{PA}$
function $f:\reals^n\to\reals$ and some $\x\in\reals^n$, the directional derivative mapping:
\[
f'(\x;\cdot):d\mapsto\lim_{t\downTo{0}}\tfrac{1}{t}(f(x+td)-f(x))
\]
 is also a
positively homogeneous $\mathcal{PL}$ approximation of $f$.
Note, however, that $f'(\x;\cdot)$ is not the same as $f^\infty$.
While, roughly,
$f'(\x;\cdot)$ describes the behavior of $f$ near $\x$, $f^\infty$
instead describes the behavior of $f$ far from the origin.

The above lemmata can be combined to characterize existence of global minima for a $\mathcal{PA}$ function in abs-normal form, as follows.

\begin{corollary}
    Suppose that Assumption~\ref{ass:ANF} holds with $m=1$, and consider the auxiliary matrices and vectors from Definition~\ref{def:MandV}. The function $f$ has a global minimum in $\reals^n$ if and only if there is no solution $(\bm\xi,\bm\omega)$ of the following MLCP:
\begin{equation}
\label{eq:auxMLCP}
    \begin{array}{rcl}
        0 &=& 1 + \tilde{\J}\bm\xi + \tilde{\Y}\bm\omega, \\
        \0 \leq \bm\omega &\perp & \tilde{\Z}\bm\xi + \tilde{\LL}\bm\omega \ge \0.
    \end{array}
\end{equation}
\end{corollary}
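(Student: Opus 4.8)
The plan is to chain together the three preceding lemmata --- Lemma~\ref{lem:existOfmin}, Lemma~\ref{lem:absHF}, and Theorem~\ref{thm:newMLCP} --- rather than redo any analysis from scratch. First I would invoke Lemma~\ref{lem:existOfmin}: $f$ has a global minimum in $\reals^n$ if and only if the equation system $f^\infty(\bm\xi)+1=0$ has no solution $\bm\xi\in\reals^n$. So the corollary reduces to showing that solvability of $f^\infty(\bm\xi)+1=0$ is equivalent to solvability of the MLCP~\eqref{eq:auxMLCP}.

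Next I would observe, via Lemma~\ref{lem:absHF}, that $f^\infty$ is itself $\mathcal{PL}$ with an abs-normal form whose coefficient matrices are exactly $\Z,\LL,\J,\Y$, and whose constant vectors $\cc,\bb$ are both replaced by $\0$. Equivalently, the equation $f^\infty(\bm\xi)+1=0$ says precisely that $-1$ is a root of the $\mathcal{PA}$ function $g:\bm\xi\mapsto f^\infty(\bm\xi)+1$, which has the abs-normal form of Lemma~\ref{lem:absHF} but with the scalar $\bb$-entry shifted from $0$ to $1$ (the $\cc$-vector stays $\0$). Now apply the MLCP half of Theorem~\ref{thm:newMLCP} to this $g$: with $\cc\mapsto\0$ we get $\tilde{\cc}=(\I-\LL)^{-1}\0=\0$ and $\tilde{\Z},\tilde{\LL},\tilde{\Y},\tilde{\J}$ unchanged from Definition~\ref{def:MandV} (these never involved $\cc$ or $\bb$), while $\tilde{\bb}=\bb+\Y\tilde{\cc}$ becomes simply the scalar $1$. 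Substituting into~\eqref{eq:newMLCP} with $(\bm\xi,\bm\omega)$ in the roles of $(\x,\w)$ gives exactly~\eqref{eq:auxMLCP}. Hence $g$ has a root --- i.e.\ $f^\infty(\bm\xi)+1=0$ is solvable --- if and only if~\eqref{eq:auxMLCP} has a solution, and combining with Lemma~\ref{lem:existOfmin} finishes the proof.

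The one point requiring a little care is the bookkeeping of which auxiliary quantities in Definition~\ref{def:MandV} depend on $\cc$ and $\bb$: only $\tilde{\cc}$ and $\tilde{\bb}$ do, so setting $\cc=\0$ and the $\bb$-entry to $1$ propagates cleanly to $\tilde{\cc}=\0$ and $\tilde{\bb}=1$ without disturbing $\tilde{\Z},\tilde{\LL},\tilde{\Y},\tilde{\J}$. I also want to make sure Theorem~\ref{thm:newMLCP} is genuinely applicable: it only needs Assumption~\ref{ass:ANF}, i.e.\ that the target function is $\mathcal{PA}$ and presented in abs-normal form~\eqref{eq:ANF}, and Lemma~\ref{lem:absHF} supplies exactly that for $f^\infty$ (and hence for $g$). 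No squareness or nonsingularity hypothesis is needed because we only use the MLCP equivalence, not the LCP refinement. I expect no real obstacle here --- the work is entirely in correctly transporting the constant vectors through Definition~\ref{def:MandV} and citing the right pieces; the logical skeleton is a three-link chain of already-proven equivalences.
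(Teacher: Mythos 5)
Your proposal is correct and follows essentially the same route as the paper, whose proof is simply a one-line citation of Theorem~\ref{thm:newMLCP}, Lemma~\ref{lem:existOfmin}, and Lemma~\ref{lem:absHF}. Your explicit bookkeeping (setting $\cc=\0$, the scalar $\bb$-entry to $1$, and noting that $\tilde{\Z},\tilde{\LL},\tilde{\Y},\tilde{\J}$ are unaffected) just fills in the details the paper leaves implicit, and it checks out.
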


\begin{proof}
This claim follows immediately from Theorem~\ref{thm:newMLCP}, Lemma~\ref{lem:existOfmin}, and Lemma~\ref{lem:absHF}.
\end{proof}

\yzedit{For instance, continuing Example~\ref{ex:construction}, we can verify the existence of a global minimum of $f$ by verifying that the following MLCP, formulated using the matrices and vectors from Example~\ref{ex:ConstructMartixAndVector}, has no solution:
\begin{align*}
0 &= 1 +  49 \xi + \begin{bmatrix}
    4 & 2 & 12
\end{bmatrix}\begin{bmatrix}
      \omega_1 \\ \omega_2 \\ \omega_3
  \end{bmatrix} \\
  \0\leq \begin{bmatrix}
      \omega_1 \\ \omega_2 \\ \omega_3
  \end{bmatrix} &\perp \begin{bmatrix}
    3 \\ 6 \\ 7
  \end{bmatrix}[\xi] + \begin{bmatrix}
      1 & 0 & 0 \\
      4 & 1 & 0\\
      0 & 0 & 1 
  \end{bmatrix}\begin{bmatrix}
      \omega_1 \\ \omega_2 \\ \omega_3
  \end{bmatrix} \geq \0,
\end{align*}}

\section{Implementation}
\label{sec:implementation}

We have developed a proof-of-concept Julia implementation of our new root-finding and minimization approaches for $\mathcal{PA}$ functions.  This implementation has been uploaded to GitHub\footnote{Available at \url{https://github.com/kamilkhanlab/abs-normal}} under the MIT license. In this implementation, for any user-defined $\mathcal{PA}$ function provided in the abs-normal form specified by Assumption~\ref{ass:ANF}, the corresponding auxiliary quantities in Definition~\ref{def:MandV} are computed automatically.

The root-finding systems MLCP~\eqref{eq:newMLCP} and LCP~\eqref{eq:newLCP} (when $\tilde{\J}$ is nonsingular) can then be automatically constructed and solved using either the \textsf{PATHSolver.jl} v1.7.5 interface~\cite{PathSolver} for the \textsf{PATH} complementarity solver \cite{DirkseFerris, FerrisMunson}, or the global optimizer \textsf{BARON.jl} v0.8.3~\cite{sahinidis1996baron}. For comparison, we also implemented the root-finding approach of Griewank et al.~\cite{GriewankStreubel}, with their MLCP formulation~\eqref{eq:oldMLCP} (when $\J$ is nonsingular) and LCP formulation~\eqref{eq:oldLCP} (when $\I-\bS$ is further nonsingular)  automatically constructed and solved using \textsf{BARON.jl} and \textsf{PATHSolver.jl}, respectively.

To verify the existence of a global minimum of a scalar-valued $\mathcal{PA}$ function $f$, our implementation automatically constructs and solves the MLCP~\eqref{eq:auxMLCP} using \textsf{BARON.jl} v0.8.3~\cite{sahinidis1996baron}. 
\yzedit{If $f$ indeed has a global minimum, both LPCC~\eqref{eq:LPCC} and MILP~\eqref{eq:MILP} systems can be used to minimize $f$. Here, the LPCC~\eqref{eq:LPCC} and MILP~\eqref{eq:MILP} (with $\mu:=10^5$ by default) are both solved using \textsf{BARON.jl}.} All optimization problems in this implementation are formulated using \textsf{JuMP.jl} v1.14.1~\cite{Lubin2023}. 

All numerical examples described below were conducted on a Windows 11 machine with a 2.50 GHz Intel i5-13400 CPU and 16GB RAM. 

\section{Numerical examples}
\label{sec:examples}

This section presents four numerical examples to illustrate our new approaches. First, the following example identifies a root of a $\mathcal{PA}$ function. 

\begin{Example}\label{ex:RootFinding}
Consider a $\mathcal{PA}$ function:
\begin{align*}
    &f_1(x_1,x_2) = \Bigl|\left|x_1+2\right|+x_2-1\Bigr| - x_2 -1,\\
    &f_2(x_1,x_2) = \left|x_1+2\right| + 2x_2-1,
\end{align*}
with the abs-normal form~\eqref{eq:ANF} where:
\yzedit{
\begin{align*}
    \cc&= \begin{bmatrix}
    \phantom{-}2 \\
    -1 
    \end{bmatrix},
    & \Z&= \begin{bmatrix}
    1 & 0\\
    0 & 1
    \end{bmatrix},
    & \LL&= \begin{bmatrix}
    0 & 0\\
    1 & 0
    \end{bmatrix},\\
    \bb&= \begin{bmatrix}
    -1\\
    -1
    \end{bmatrix}, 
    &\J&= \begin{bmatrix}
    0 & -1\\
    0 & \phantom{-}2
    \end{bmatrix},
    &\Y&= \begin{bmatrix}
    0 & 1\\
    1 & 0
    \end{bmatrix}.
\end{align*}
}
Using our Julia implementation of the root-finding methods described in Section 2, $\f(\x)=\0$  was solved to yield $(x_1,x_2)=(0,-0.5)$, via both the MLCP~\eqref{eq:newMLCP} and the LCP~\eqref{eq:newLCP}.
\end{Example}

Next, given a $\mathcal{PA}$ function $f:\mathbb{R}^n \rightarrow \mathbb{R}$, we demonstrate that the MILP system~\eqref{eq:MILP} and LPCC system~\eqref{eq:LPCC} effectively identify a global minimum. This minimum's existence is verified through the MLCP system~\eqref{eq:auxMLCP}.

\begin{Example}\label{ex:optimization}
Consider a $\mathcal{PA}$ function:
\[
    f(x) = \Bigl|x_1+ \left|2x_2-1\right| \Bigr|+ \left|3 + x_3\right|
\]
whose abs-normal form is based on the following vectors and matrices:
\yzedit{
\begin{align*}
    \cc&= \begin{bmatrix}
    -1 \\ \phantom{-}0  \\ \phantom{-}3
    \end{bmatrix},
    & \Z&= \begin{bmatrix}
    0 & 2 & 0\\
    1 & 0 & 0\\
    0 & 0 & 1
    \end{bmatrix},
    & \LL&= \begin{bmatrix}
    0 & 0 & 0\\
    1 & 0 & 0\\
    0 & 0 & 0\\
    \end{bmatrix},\\
    \bb&= \begin{bmatrix}
    0
    \end{bmatrix}, 
    &\J&= \begin{bmatrix}
    0 & 0 & 0\\
    \end{bmatrix},
    &\Y&= \begin{bmatrix}
    0 & 1 & 1
    \end{bmatrix}.
\end{align*}
}
The existence of a global minimum for $f(\x)$ was verified using our implementation of MLCP~\eqref{eq:auxMLCP}, showing that $f(\x)$ indeed has a global minimum. Then, the minimum was numerically computed as $\x^\ast=(0.0, 0.5, \yzedit{-3.0})$ using our Julia implementation of both the MILP~\eqref{eq:MILP} and the LPCC~\eqref{eq:LPCC}.
\end{Example}

The following example describes how the computation time for determining a root of $\f:\reals^n\to \reals^n$ scales with $n$, via both the MLCP~\eqref{eq:newMLCP} and the LCP~\eqref{eq:newLCP}. 

\begin{Example}\label{ex:ScalableRootFinding}

    Consider a  $\mathcal{PA}$ function $\f:\reals^n\to\reals^n$ in abs-normal form~\eqref{eq:ANF} \yzedit{with $s=n$, randomly generated as follows}.
    All components of  $\cc$, $\bb$, and \yzedit{$\Y$ were generated based on a normal distribution and rounded to the nearest integer, $\J$ is set to $\I$, and $\Z$ is set to $\mathbf{0}$.} \yzedit{The strictly lower triangular matrix $\mathbf{L}$ is set to have ones on the first lower subdiagonal, with all other elements being zero.}
    For several values of $n$, ranging from $2$ to $500$, $\tilde{\J}$ was verified to be nonsingular, and we solved $\f(\x)=\0$ by applying our Julia implementations of the MLCP~\eqref{eq:newMLCP} and the LCP~\eqref{eq:newLCP}, \yzedit{employing both \textsf{PATHSolver.jl} and \textsf{BARON.jl} as solvers.} Figure~\ref{fig:cpu_time_root_finding} depicts the corresponding CPU times, averaged over 100 runs. As expected, under both formulations, solution time increases as the problem size increases. Observe that for larger values of $n$, the LCP formulation is solved much faster than the MLCP formulation. \yzedit{Although we observe that \textsf{PATHSolver.jl} may fail to solve both LCP and MLCP formulations at larger values of $n$,  when it is successful, it consistently requires fewer CPU times than the general-purpose global optimizer \textsf{BARON.jl} for both formulations, especially as the problem size increases. As highlighted in the zoomed-in section in Figure~\ref{fig:cpu_time_root_finding}, for smaller values of $n$, solving the MLCP with \textsf{PATHSolver.jl} is even faster than solving the LCP with \textsf{BARON.jl}.}
    
\begin{figure}[h]
    \centering
    \includegraphics[width=0.7\linewidth]{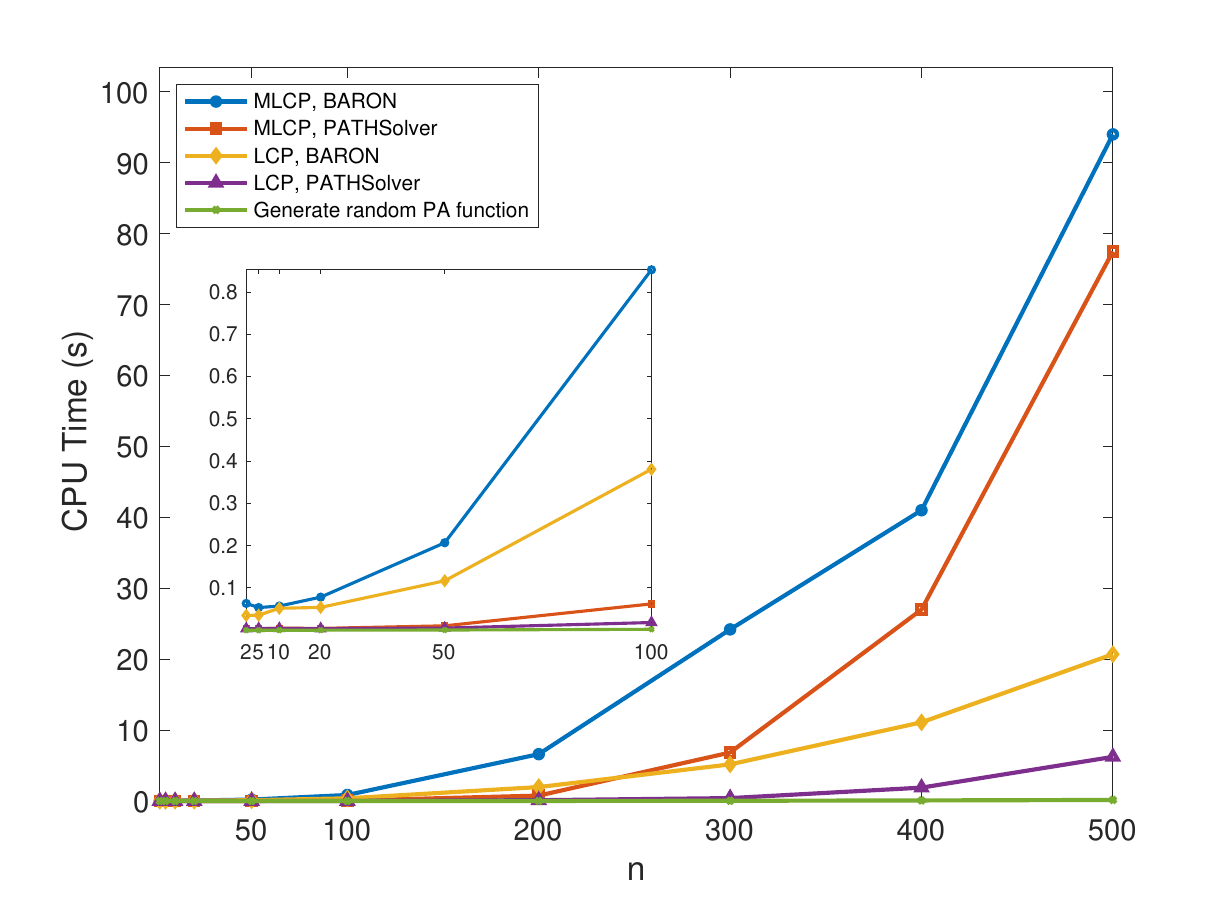}
    \caption{\yzedit{CPU time (s) for generating a random $\mathcal{PA}$ function in Example~\ref{ex:ScalableRootFinding} and determining its root using our new root-finding approaches (including computing all necessary vectors, matrices, and solving the corresponding formulations), averaged over 100 runs. The inset figure zooms in on the lower-left corner.}}
    \label{fig:cpu_time_root_finding}
\end{figure}
\end{Example}

\yzedit{In the following example, we compare our new root-finding formulations MLCP~\eqref{eq:newMLCP} and LCP~\eqref{eq:newLCP} with the approach of Griewank et al.~\cite{GriewankStreubel}, where we analyze the computation time required for determining a root of $\f: \reals^n \rightarrow \reals^n$ as it scales with $n$.}
\yzedit{
\begin{Example}\label{ex:CompareRootGriewank}
Consider a randomly generated $\mathcal{PA}$ function $\f:\reals^n\to\reals^n$ in abs-normal form, where $s=n$. We generate coefficient vectors $\cc$ and $\bb$, and the coefficient matrices $\Y$, $\Z$, and $\LL$, in the same manner as in Example~\ref{ex:ScalableRootFinding}. Here, the coefficient matrix $\J$ is set as the identity matrix since Griewank et al.'s MLCP~\eqref{eq:oldMLCP} and LCP~\eqref{eq:oldLCP} systems require $\J$ to be nonsingular. For several values of $n$, ranging from $2$ to $100$, $\J$ and $(\I-\bS)$ were verified to be nonsingular, and we solved $\f(\x) = \0$ using our Julia implementation of the new root-finding approaches with \textsf{PATHSolver.jl}, alongside Griewank et al.’s approaches. Here, results for larger $n$ values ($n \geq 100$) are not depicted, as \textsf{PATHSolver.jl}  failed to solve the corresponding formulations. Figure~\ref{fig:cpu_time_compare_with_Griewank} shows the CPU times averaged over 100 runs. The small inset in Figure~\ref{fig:cpu_time_compare_with_Griewank} excludes the computation time for solving the previous MLCP~\eqref{eq:oldMLCP} using \textsf{BARON.jl}, allowing for a clearer comparison between the remaining formulations. Observe that our new LCP~\eqref{eq:newLCP}, solved by \textsf{PATHSolver.jl}, is faster than other approaches across all values of $n$. 
\begin{figure}[h]
    \centering
    \includegraphics[width=0.7\linewidth]{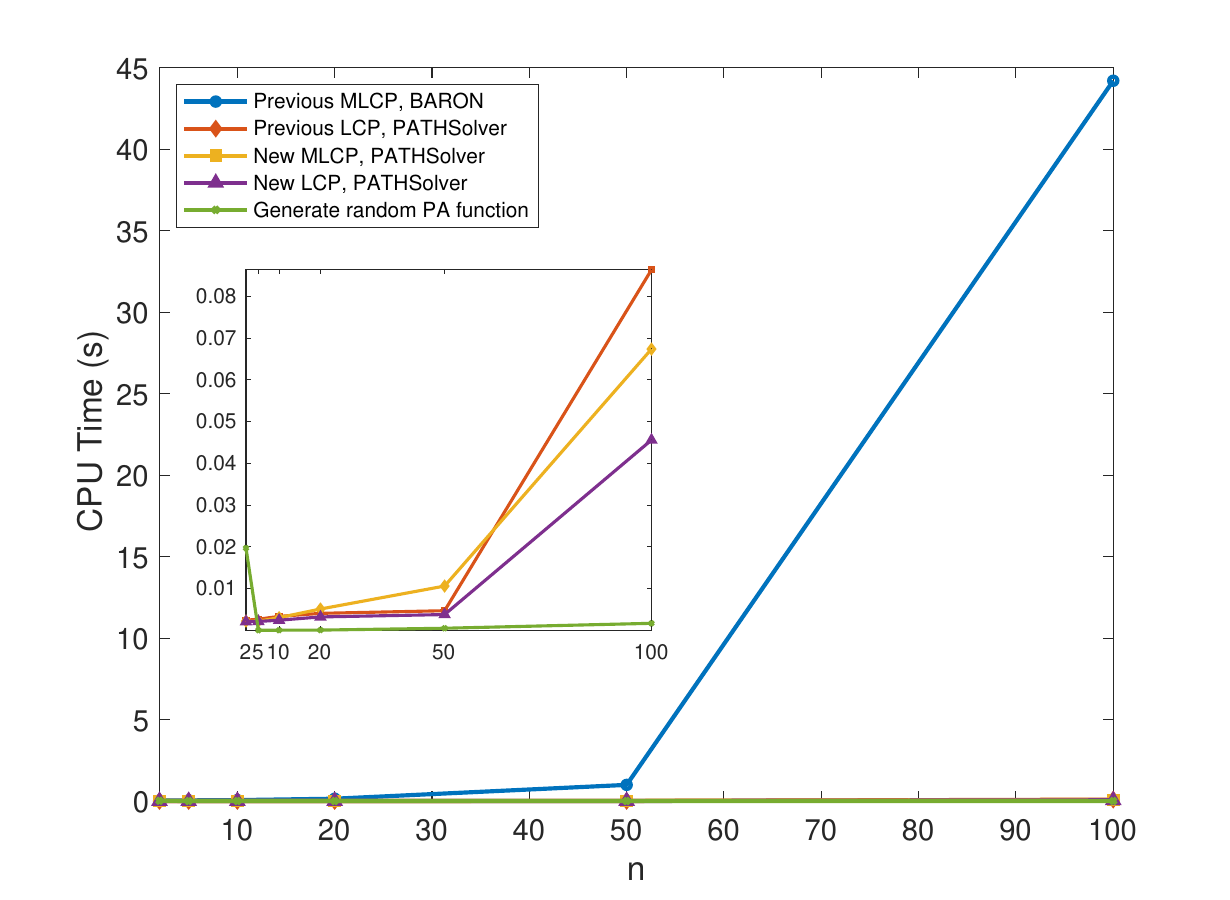}
    \caption{\yzedit{CPU time (s) for generating a random $\mathcal{PA}$ function in Example~\ref{ex:CompareRootGriewank} and determining its root using both our new approaches and Griewank et al.'s approaches (including computing all necessary vectors, matrices, and solving the corresponding formulations), averaged over 100 runs. The inset figure zooms in on the lower-left corner.}}\label{fig:cpu_time_compare_with_Griewank}
\end{figure}    
\end{Example}
}

Our final example will illustrate the LPCC~\eqref{eq:LPCC} and MILP~\eqref{eq:MILP} formulations for minimizing a $\mathcal{PA}$ function. 

\begin{Example}\label{ex:ScalableOptimization} Consider the following $\mathcal{PA}$ function $f:\reals^n \rightarrow \reals$ in abs-normal form, with the dimension $n$ to be varied for illustration:
\begin{align*}
  f(\x) &= \biggl| \Bigl| \bigl| |1000x_1| + 1000x_2 \bigr| + 1000x_3 \Bigr| + \dots + 1000x_n \biggr| \\
  &\qquad{}+ 1.0,
\end{align*}
\yzedit{In this example, the number of absolute value functions present is equal to the dimension $n$ of the input $\x$}. For several choices of $n$, ranging from $2$ to $500$, \yzedit{the existence of a global minimum was verified using our implementation of MLCP~\eqref{eq:auxMLCP}}, and computed numerically using our implementation of the LPCC~\eqref{eq:LPCC} and the MILP~\eqref{eq:MILP}. The corresponding CPU times (averaged over \yzedit{100} runs) are depicted in Figure \ref{fig:cpu_time_optimal_finding}, \yzedit{with a zoomed-in portion showing the behavior for smaller values of $n$.} Observe that the MILP~\eqref{eq:MILP} was faster than the LPCC~\eqref{eq:LPCC} in all of these instances.

\begin{figure}[h]
    \centering
    \includegraphics[width=0.7\linewidth]{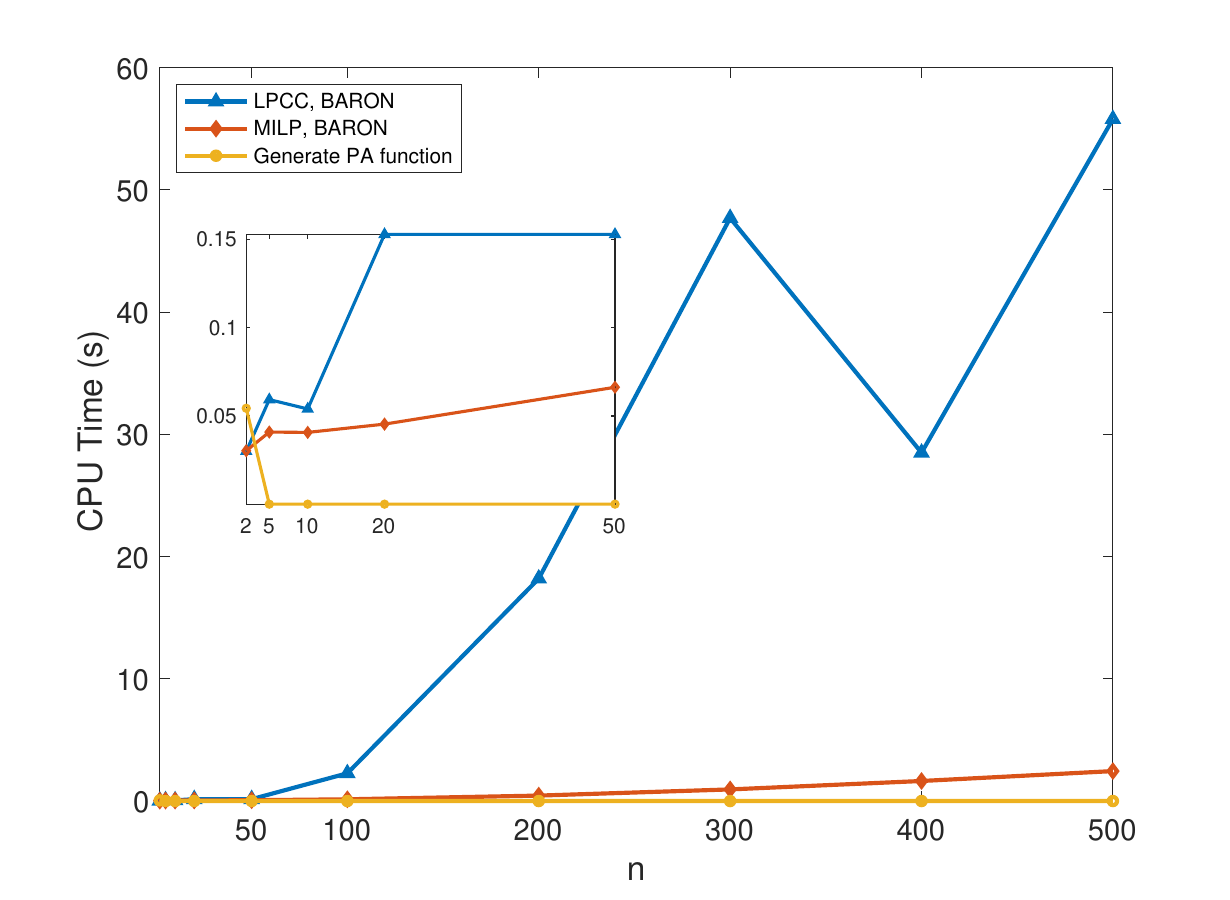}
    \caption{\yzedit{
    CPU time (s) for computing our auxiliary matrices/vectors in Definition~\ref{def:MandV} for the function $f$ in Example~\ref{ex:ScalableOptimization} (yellow), and for minimizing it via our MILP formulation (red) and our LPCC formulation (blue), averaged over 100 runs. The inset figure zooms in on the lower-left corner.}}
    
    \label{fig:cpu_time_optimal_finding}
\end{figure}

\end{Example}

\section{Conclusion}   
The above root-finding and optimization approaches are essentially new post-processing steps for the abs-normal form, and contribute to the practical utility of deploying the abs-normal form when considering ``real-world'' $\mathcal{PA}$ functions. Recall that the abs-normal form itself is constructed via an AD-like procedure, and is also based on \yzedit{another} piecewise-linearization AD procedure when a $\mathcal{PA}$ approximation of a nonsmooth function is sought. Now that piecewise-affine root-finding and optimization have been expressed as standard complementarity problems or LPCCs, we expect that any future advances in solving LCPs, MLCPs, MILPs, and/or LPCCs will also \yzedit{ enhance our capacity for computations involving $\mathcal{PA}$ functions.} 
\yzedit{Future work may involve tailoring these methods to special cases, such as abs-normal forms that naturally represent convex functions or difference-of-convex functions (such as by~\cite{kazda2021nonconvex}).
Future work may involve applying these methods to nontrivial application-oriented examples, perhaps involving neural networks, though this would require automation of the relevant function’s abs-normal form. (Our current implementation requires the abs-normal form as an input, and these were computed by hand in our numerical examples.) Since there is a new Julia interface \textsf{ADOLC.jl} v1.3.0~\cite{ADOLC.jl} to the AD package \textsf{ADOL-C}~\cite{ADOLC}, and since ADOL-C contains an implementation of piecewise linearization, perhaps these approaches could be chained together.}

\bibliographystyle{siam} 
\bibliography{references}

\end{document}